%Last modified by Luis 12/23/15
\documentclass[psamsfonts]{amsart}
\markboth{left head}{right head}
\usepackage{amsmath,amstext,amssymb,amsopn,amsthm}
\usepackage{amsmath,amssymb,amsthm}
\usepackage[mathscr]{eucal}
\usepackage{bbm}
\usepackage{color}
\usepackage{tikz}
\usetikzlibrary{arrows}
\usepackage[unicode,bookmarks,colorlinks]{hyperref}
\hypersetup{
    linkcolor=brickred,
}

\definecolor{mahogany}{cmyk}{0, 0.77, 0.87, 0}
\definecolor{salmon}{cmyk}{0, 0.53, 0.38, 0}
\definecolor{melon}{cmyk}{0, 0.46, 0.50, 0}
\definecolor{yellowgreen}{cmyk}{0.44, 0, 0.74, 0}
\definecolor{brickred}{cmyk}{0, 0.89, 0.94, 0.28}
\definecolor{OliveGreen}{cmyk}{0.64, 0, 0.95, 0.40}
\definecolor{RawSienna}{cmyk}{0, 0.72, 1.0, 0.45}
\definecolor{ZurichRed}{rgb}{1, 0, 0} 

\bibliographystyle{plain}
\numberwithin{equation}{section}

\newtheorem{thm}{Theorem}[section]        %newenvironments

\newtheorem{lem}{Lemma}[section]
\newtheorem{prop}{Proposition}[section]
\newtheorem{rmk}{Remark}[section]
\newtheorem{definition}{Definition}[section]

\newenvironment{•}{•}{•}
\newcommand{\R}{\mathbb{R}}                  %newcommands
\newcommand{\Rd}{\R^d}               
\newcommand{\ioRd}{\int_{\Rd}}              %int over rd
    %int over general 
\newcommand{\set}[1]{ \left\{#1\right\} }
\newcommand{\mysum}[3]{\sum\limits_{#1=#2}^{#3}}          %#1,#3 are a {i,j,k}

\newcommand{\E}[1]{\mathbb{E}\left[#1\right]}
              %wh=widehat

\newcommand{\eid}{\,\,{\buildrel \mathcal{D} \over =}\,\,}   %eid=equal in distribution

\newcommand{\abs}[1]{\left|#1\right|}
                              % braquets in set notation

\newcommand{\tgo}{t\downarrow 0}
\newcommand{\F} {(-\Delta)^{ \frac{\alpha}{2} } }
\newcommand{\Prob} {\mathbb{P} }
\newcommand{\palp}{p_{t}^{(\alpha)}}

\newcommand{\Halp}{\mathbb{H}_{\Omega}^{(\alpha)}(t)}
\newcommand{\Hset}[2]{\mathbb{H}_{#1,#2}^{(\alpha)}(t)}
\newcommand{\ele}{\ell_{\alpha}(t)} 
\newcommand{\inalp}{ \frac{1}{\alpha}} 
\newcommand{\ld}{\abs{\Omega}}
\newcommand{\myexp}[1]{\exp\left(#1\right)}
\newcommand{\pthesis}[1]{\left(#1\right)}
\newcommand{\alpr}{0<\alpha<2}
\newcommand{\pint}[1]{\left[#1\right]}
\newcommand{\bd}{\partial\Omega}

\newcommand{\md}{\rho_{\Omega}(x)}
\newcommand{\myH}{\mathbb{H}}

\newcommand{\myP}{\mathcal{P}}
\usepackage[margin=1.25in]{geometry}
\setlength{\textheight}{50pc} 
\setlength{\textwidth}{35pc}

\begin{document}

\title[ Heat content ]{Heat content for stable processes in domains of $\R^d$}
\author{Luis Acu\~na Valverde}\thanks{Supported in part by NSF Grant
\#0603701-DMS, Rodrigo  Ba\~nuelos, PI}
\address{Department of Mathematics, Universidad de Costa Rica, San Jos\'e, Costa Rica.}
\email{luis.acunavalverde@ucr.ac.cr/ guillemp22@yahoo.com}
\maketitle

%--------------------------------------------------------------------------------------------\begin{abstract} 
\begin{abstract}
This paper studies   the small time behavior of the heat content for  rotationally invariant $\alpha$--stable processes, $0<\alpha \leq 2$, in domains of $\R^d$.  Unlike the asymptotics for the heat trace,  the behavior of the heat content differs depending on the range of $\alpha$ according to   $0<\alpha<1$, $\alpha=1$ and $1<\alpha\leq 2$.  
\end{abstract}

\section{Introduction}
Let ${\bf B}=\set{B_t}_{t\geq0}$ be a $d$--dimensional Brownian motion in a probability
space $(\mathcal{N}_1,\{\mathcal{F}_t^{\bf B}\}_{t\geq 0},\Prob_{\bf B}^x)$ and let  ${\bf S}=\set{S_t}_{t\geq 0}$ be an $\alpha/2$--subordinator started at zero in the probability space  $(\mathcal{N}_2,\mathcal{G},\mathbb{P}_{\bf S})$ with $0<\alpha\leq2$. We recall that ${\bf S}=\set{S_t}_{t\geq 0}$  is a  one-dimensional L\'{e}vy--process taking values on $[0,\infty)$ with increasing sample paths. That is, for every $u,t\geq 0$, the increment $S_{t+u}-S_u$ is independent of the process $\set{S_{\tau},0\leq \tau\leq u }$ and has the same law as $S_t$.  Moreover, if $0\leq t\leq u$, we have  $0\leq S_t\leq S_u$ almost everywhere.  When $\alpha=2$, we adopt the convention $S_t=t$.  We will consider both processes ${\bf B}$  and ${\bf S }$ on the product space 
$\mathcal{N}=\mathcal{N}_1 \times \mathcal{N}_2$.  In addition, we set $\mathcal{F}_t=\mathcal{F}_t^{\bf B}
\times\mathcal{G}$ and $\mathbb{P}^x=\mathbb{P}_{\bf B}^x \times \mathbb{P}_{\bf S}.$ 
Hence, ${\bf B}$ is a $d$-dimensional $\mathcal{F}_t$--Brownian motion and
${\bf S}$ is an $\alpha/2$--subordinator independent of ${\bf B}$ when they are regarded as stochastic
 processes defined over $(\mathcal{N}, \Prob^x)$ (we refer the reader to  \cite
{Song2} for further details). %From now on, every process and every  random variable will be defined over $\mathcal{N}$.

Consider the stochastic process ${\bf X}=\set{X_t}_{t\geq 0}$ in the probability
space $(\mathcal{N},\{\mathcal{F}_t\}_{t\geq 0},\Prob ^x)$ defined as $X_t(w_1,w_2)=B_{2S_t(w_2)}(w_1)$ for every $(w_1,w_2)\in \mathcal{N}$. Then, due to the independence between ${\bf B}$ and ${\bf S}$ stated in the first paragraph when they are regarded as stochastic
 processes defined over $(\mathcal{N}, \Prob^x)$ and Theorem 1.3.25  in \cite{appleb}, we obtain that  ${\bf X}$ is  a rotationally invariant $\alpha$-stable L\'evy process in $\R^d$ whose Fourier transform of   its  transition densities, denoted throughout the paper by $\palp(x,y)=\palp(x-y)$, satisfy
\begin{equation}\label{CharF.Proc}
\exp\pthesis{-t\abs{\xi}^{\alpha}}=\ioRd 
\exp\pthesis{-\dot{\iota}<y,\xi>}\palp(y)dy,
\end{equation}
for all $t>0$, $\xi\in\Rd$. Before providing some basic properties about the heat kernels $\palp(x,y)$, we introduce the following standard notation. $\mathbb{E}^x$ and $\Prob^x$ will denote the expectation 
and  probability of any  process started at $x$, respectively. Also  for  simplicity, 
we will connote $\Prob=\Prob^{0}$, $\mathbb{E}=\mathbb{E}^{0}$  and write 
$Z\eid Y$ for two random variables $Z,Y$ with values in $\Rd$ to mean that they are equal 
in distribution or have the same law.  Throughout the paper, $\eta_{t}^{(\alpha/2)}(s)$ will stand for the transition density of the random variable $S_t$.

In the case $\alpha=2$, ${\bf X}$ is by definition a Brownian motion running at twice the usual speed and 
\begin{equation*}
p^{(2)}_t(x,y)=(4\pi t)^{-d/2}\myexp{-\frac{\abs{x-y}^2}{4t}}.
\end{equation*}
As for the cases $0<\alpha<2$, it is a standard fact  that the transition densities $\palp(x,y)$ can be written in terms of the $\alpha/2$-subordinator (see \cite[p.~522]{Acu} for further details). That is, 
\begin{align}\label{dbysub}
\palp(x,y)=\E{p^{(2)}_{S_t}(x,y)}=\int_0^{\infty} ds\,p^{(2)}_s(x,y) \,\eta_{t}^{(\alpha/2)}(s).
\end{align}
We remark at this point that identity \eqref{dbysub} will be useful
to extend results known for $\alpha=2$ in higher dimensions to the cases $1<\alpha<2$.
%--------------------------------------------------------------------------------------------------------------------------------------------------

The transition densities $\palp(x,y)$ are known to have  an explicit expression only 
for  $\alpha=2$ and  $\alpha=1$.  In fact, for $\alpha=1$, the function
$p^{(1)}_t(x,y)$ is called the Cauchy (or Poisson in  analysis) heat kernel and  is given
by
\begin{align}\label{Cauchyk}
p^{(1)}_t(x,y)=\frac{k_d\,t}{\pthesis{t^2+\abs{x-y}^2}^{(d+1)/2}},
\end{align}  
where
\begin{align*}
k_d=\frac{\Gamma\pthesis{\frac{d+1}{2}}}{\pi^{\frac{d+1}{2}}}
\end{align*}
and the stochastic process ${\bf X}$ is called  Cauchy process.  
However, for the purposes of this paper, we only need to make use of  the following two facts about $\palp(x,y)$ for all $\alpr$. First, there exists $c_{\alpha,d}>0$ such that
\begin{align}\label{tcomp}
c_{\alpha,d}^{-1}\min\set{t^{-d/\alpha},\frac{t}{\abs{x-y}^{d+\alpha}}}\leq
\palp(x-y)\leq c_{\alpha,d}\min\set{t^{-d/\alpha},\frac{t}{\abs{x-y}^{d+\alpha}}},
\end{align}
for all $x,y\in \R^d$ and $t>0$ (see \cite{Chen1}).  Secondly, according to \cite[Theorem 2.1]{Blum}, we have
\begin{align}\label{tlim}
\lim_{\tgo}\frac{\palp(x-y)}{t}=\frac{A_{\alpha,d}}{\abs{x-y}^{d+\alpha}},
\end{align}
for all $x\neq y$, where 
\begin{equation}\label{Adef}
A_{\alpha,d}=\alpha \, 2^{\alpha-1}\, \pi^{-1-\frac{d}{2}}\,\sin\pthesis{\frac{\pi\alpha}{2}}\,\Gamma\pthesis{\frac{d+\alpha}{2}}\,\Gamma\pthesis{\frac{\alpha}{2}}.
\end{equation}

With all the necessary facts about the transition densities
$\palp(x,y)$ being properly recalled, we proceed to introduce the geometric objects where the stable processes will be studied. 
Let $\Omega\subset \R^d$ satisfy the following assumptions   according to the
dimension $d$ under consideration. If $d=1$, $\Omega$ will  be an open interval  $(a,b)$, $-\infty<a<b<\infty$ whose length $b-a$  will   be denoted  by $\ld$.
For  $d\geq 2$, the set $\Omega$ will be a uniformly $C^{1,1}$-regular  bounded domain  where $|\Omega|$ and  $\bd$ stand for the Lebesgue measure of $\Omega$ in $\Rd$
and its boundary, respectively. We recall that 
\begin{definition}
 $\Omega\subset \R^d$, $d\geq2$ with either finite or infinite Lebesgue measure and
non--empty boundary $\bd$ is said to be a uniformly $C^{1,1}$-regular set if there are constants $r, L > 0$ such that for every $ \sigma
\in \bd$, the set $ \bd \cap B_r(\sigma)$ is the graph of
a $C^{ 1,1}$--function $\Lambda$ with $||\nabla \Lambda||_{\infty}\leq  L$. Here and for the remainder of the paper, $B_r(\sigma)$ will represent the open ball   about $\sigma$ with radius $r$. 
\end{definition}

We point out   that according to Lemma 2.2  in \cite{Aik}, uniformly $C^{1,1}$--regular  bounded domains are  also $R$-smooth boundary domains. That is, for every $\sigma \in \bd$, there are two open balls $B_1$ and $B_2$ with radii $R$ such that $B_1 \subset \Omega$, $B_2 \subset \R^d \setminus\bar{\Omega}$ and $\partial B_1 \cap \partial B_ 2 = \sigma$. Henceforth,  for any $\Omega\subset \Rd$, we set
\begin{align}\label{Haus}
  \mathcal{H}^{d-1}(\bd)=\left\{
  \begin{array}{c c }
   \text{Hausdorff measure of the boundary of $\Omega$,} & \quad \text{if  $d\geq 2$, }\\ 
    \\
   \#\set{x\in \R: x\in \partial \Omega}, & \quad \text{if $d=1$.} \\ 
  \end{array} \right.
\end{align}
Of course, for $C^{1,1}$--domains as above,  this is just the surface area of the boundary of the domain.  

Let us consider for any Borel   measurable sets $\Omega, \Omega_0$ in $\R^d$, the following quantity
\begin{align}\label{hset}
\mathbb{H}^{(\alpha)}_{\Omega, \Omega_0}(t)=\int_{\Omega}dx\,\,\Prob^{x}\pthesis{X_t\in \Omega_0}=\int_{\Omega}dx\,\int_{\Omega_0}dy\,
\,\palp(x,y),
\end{align}
which turns out to be well  defined for example when either $\Omega$ or $\Omega_0$ has finite Lebesgue measure. 
When $\Omega=\Omega_0$, we simply denote $\myH_{\Omega,\Omega}^{(\alpha)}(t)$ by $\myH^{(\alpha)}_{\Omega}(t)$.
%--------------------------------------------------------------------------------------------------------------------------------------------------

One of the goals of this paper is to study the small time behavior of the  function $\Halp$, which is equivalent
to analysing the behavior of $\myH_{\Omega,\Omega^c}^{(\alpha)}(t)$ as $\tgo$ since
\begin{align}\label{Hdef}
\Halp=\abs{\Omega}-\myH_{\Omega,\Omega^c}^{(\alpha)}(t).
\end{align}

We note that the function $u(t,x)=\int_{\Omega}\,dy\,\palp(x,y)$ is the unique weak solution to the initial value problem
\begin{align}\label{ivp}
\frac{\partial u}{
\partial t}&=-\F u(t,x), \,\,\,\, (t,x)\in (0,\infty)\times \R^d,\\
u(0,x)&=\mathbbm{1}_{\Omega}(x).\nonumber
\end{align}
Here, $\F$ denotes the fractional Laplacian and   the interested reader may consult \cite{Acu,  Acu3, Acu2,  BanSab, BanYil} for a detailed treatment and  applications to the theory of Schr\"odinger operators
and scattering theory.  In  other words, the initial value problem \eqref{ivp} exactly says that $\Halp$ represents the amount of heat in $\Omega$, 
if $\Omega$ is at initial temperature $1$ and if $\Omega^c$ is at initial temperature $0$.
In \cite{vanden3}, M. van den Berg  called $\myH^{(2)}_{\Omega}(t)$  the heat content of $\Omega$ in $\Rd$ and therefore  following the terminology introduced by M. van den Berg, we will also call $\Halp$  the heat content of $\Omega$ in $\Rd$.  We refer the interested reader to the papers \cite{van6, van7, van8} for recent results concerning bounds and asymptotic behaviors of the heat content corresponding to the Brownian motion over open sets, polygonal  domains and its extensions when dealing with compact manifolds.
%----------------------------------------------------------------------------------

We now proceed to interpret $\myH_{\Omega,\Omega^c}^{(\alpha)}(t)$ and discuss its connections with  a spectral function and the heat semi-group.  From definition \eqref{hset}, we observe  that $\Hset{\Omega}{\Omega^c}$  describes how fast in average the underlying stochastic process  ${\bf X}$, when started at some point inside of $\Omega$, escape from $\Omega$. When $\alpha =2$, as  previously mentioned, the process ${\bf X}$ is  the Brownian motion at twice speed whose  paths are continuous, whereas for $0<\alpha<2$, the paths of ${\bf X}$ are right continuous with left limits. Thus, $\Hset{\Omega}{\Omega^c}$, by definition, is related to  the jumps or the fluctuation of   the paths up to time $t$ of the corresponding process  under consideration. 

The interest in studying $\Hset{\Omega}{\Omega^c}$ derives from the results  known
about $\myH_{\Omega, \Omega^c}^{(2)}(t)$ in higher dimensions which  we proceed to mention. We consider  the  heat semi-group acting on $L^{2}(\R^d)$ associated with the
process ${\bf X}$. Namely,
\begin{equation}
T_t^{(\alpha)}(f)(x)=\int_{\R^d}dy\,\,f(y)\,\,p^{(\alpha)}_t(x-y)=\E{f(x-X_t)}.
\end{equation}
Therefore, it follows from \eqref{hset} that 
\begin{equation}
\myH_{\Omega,\Omega^c}^{(\alpha)}(t)=\left<T_t^{(\alpha)}(\mathbbm{1}_{\Omega}),\mathbbm{1}_{\Omega^c}\right>,
\end{equation}
where $\langle\cdot,\cdot\rangle$ denotes the standard inner product in $L^{2}(\R^d)$.
In \cite{Mir} and \cite{Pre}, M. Miranda, D. Pallara,  F. Paronetto and M. Preunkert have investigated for the Brownian motion case $\alpha=2$  the connections between
 $\myH^{(2)}_{\Omega,\Omega^c}(t)$, functions of bounded variation and   the isoperimetric inequality by means of analytic tools when $d\geq 2$ for not only uniformly $C^{1,1}$-regular bounded domains  but also bounded  Cacciopoli sets.  In fact, it is shown in \cite[Proposition 8]{Pre}
  that
\begin{align}\label{isop}
\frac{\myH^{(2)}_{\Omega,\Omega^{c}}(t)}{\sqrt{t}}\leq
\frac{1}{\sqrt{\pi}}\mathcal{H}^{d-1}(\bd)
\end{align}
for all $t>0$, while in  \cite[Theorem 2.4]{Mir}  is proved  that 
\begin{equation}\label{hc2}
\lim\limits_{\tgo}\frac{\myH^{(2)}_{\Omega,\Omega^{c}}(t)}{\sqrt{t}}=\frac{1}{\sqrt{\pi}}\mathcal{H}^{d-1}(\bd).
\end{equation}
 Consequently, the preceding limit and \eqref{Hdef} yield the following asymptotic expansion for such domains,
\begin{equation*}
\myH^{(2)}_{\Omega}(t)=|\Omega|-\frac{1}{\sqrt{\pi}}\mathcal{H}^{d-1}(\bd)\,t^{\frac{1}{2}}+o(t^{\frac{1}{2}}), \,\,\, \tgo.
\end{equation*}
The main observation here is that we are able to recover a geometry feature of the set $\Omega$ in addition to its volume 
from the small asymptotic expansion of $\myH^{(2)}_{\Omega}(t)$, namely, the surface area of its boundary $\bd$. 
%=======================================

The  main purpose in this paper is to investigate the asymptotic behavior of $\myH^{(
\alpha)}_{\Omega}(t)$, $0<\alpha<2$ as $\tgo$ and try to recover information on the
geometry of $\Omega$. As we will see later, we are able to  recover the surface area of the boundary if $1\leq \alpha<2$ and
 the fractional $\alpha$--perimeter  when $0<\alpha<1$, (see \eqref{alpper} below for definition of the the fractional perimeter) from the small time behavior of the heat content.
 
 For $d=1$, our main result  is the
 following.

\begin{thm} \label{mth} Let $\Omega=(a,b)$, $-\infty <a<b<\infty$ and $\ld=b-a$.
\item[$(i)$] For $1<\alpha\leq 2$ and  all $t>0$, we have
\begin{align*}
\Hset{\Omega}{\Omega^c}=\frac{2}{\pi}\,\Gamma\pthesis{1-\inalp}t^{\inalp} + R_{\alpha}(t),
\end{align*}
with 
\begin{align*}
\abs{R_{\alpha}(t)}\leq C\pthesis{\,t\,\mathbbm{1}_{(1,2)}(\alpha)+t^{3/2}
\,\mathbbm{1}_{\set{2}}(\alpha)}.
\end{align*}
\item[$(ii)$] For $\alpha=1$ and all $t>0$, the following  equality holds.
\begin{align*}
\myH_{\Omega, \Omega^c}^{(1)}(t)=\frac{2}{\pi}\,t\ln\pthesis{\frac{1}{t}}
+\frac{2}{\pi}\pthesis{\ld \,\arctan\pthesis{\frac{t}{\ld}}+
\frac{1}{2}\,\,t\,\ln\pthesis{t^{2}+\ld^{2}}}, 
\end{align*}
\item[$(iii)$] Let $0<\alpha<1$ and $0<t<\min\set{\ld^{\alpha}, e^{-1}}$. We obtain the subsequent expansions according to the following sub-cases.
\begin{enumerate}
\item[$(iv)$] If $1/\alpha \notin \mathbb{N}$, then there is  a constant $C_{\alpha}$ independent of $\Omega$ such that
\begin{align*}
\Hset{\Omega}{\Omega^c}=\frac{2}{\pi}\mysum{n}{1}{\pint{\inalp}}(-1)^{n-1}\frac{
\Gamma(n\alpha)}{(1-n\alpha)n!}\ld^{1-n\alpha}\sin\pthesis{\frac{\pi n\alpha}{2}}t^{n}+\,\,C_{\alpha}\,t^{\inalp}
+R_{\alpha}(t),
\end{align*}  
with
$\abs{ R_{\alpha}(t)}\leq C \,\,t^{\left[\inalp \right]+1}$.
\\
\item[$(v)$] If $\alpha=1/N$, for some $N \in \mathbb{N}$, then
there is a constant $C_{N}(\Omega)$ such that
\begin{align*}
\myH^{(1/N)}_{\Omega,\Omega^c}(t)&= \frac{2}{\pi}\mysum{n}{1}{N-1}(-1)^{n-1}
\frac{
\Gamma(n/N)}{(1-n/N)n!}\ld^{1-n/N}\sin\pthesis{\frac{\pi n}{2N}}t^{n}
\\&+\,(-1)^{N-1}\,\frac{2}{\pi (N-1)!}\,t^{N}\ln\pthesis{\frac{1}{t}}+ C_{N}(\Omega)\,t^{N}
+R_{1/N}(t),
\end{align*}
with $\abs{R_{1/N}(t)}\leq C\,\, t^{N+1}$.
\end{enumerate}
In all the above statements, $C>0$ depends only on $\alpha$ and $\Omega$.
\end{thm}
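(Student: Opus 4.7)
By translation invariance and the symmetry $\palp(z)=\palp(-z)$, a direct change of variables collapses the defining two-dimensional integral to a one-dimensional one: for $\Omega=(a,b)$ with $\ld=b-a$,
$$\Hset{\Omega}{\Omega^c}=2\int_0^\infty \palp(w)\min(w,\ld)\,dw=\E{\min(\abs{X_t},\ld)}.$$
Together with the stable scaling $X_t\eid t^{\inalp}X_1$, each case reduces to understanding $\E{\min(t^{\inalp}\abs{X_1},\ld)}$ as $\tgo$.

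\textbf{Cases (i) and (ii).} For $1<\alpha\le 2$, $\abs{X_1}$ is integrable, so writing $\min(a,b)=a-(a-b)^+$ gives
$$\Hset{\Omega}{\Omega^c}=t^{\inalp}\E{\abs{X_1}}-t^{\inalp}\E{(\abs{X_1}-\ld t^{-\inalp})^+}.$$
The Fourier representation $\abs{x}=\frac{2}{\pi}\int_0^\infty(1-\cos x\xi)\xi^{-2}d\xi$ together with \eqref{CharF.Proc} yields $\E{\abs{X_1}}=\frac{2}{\pi}\int_0^\infty(1-e^{-\xi^\alpha})\xi^{-2}d\xi$; the substitution $u=\xi^\alpha$ and the beta-type identity $\int_0^\infty(1-e^{-u})u^{-1-\inalp}du=\alpha\,\Gamma(1-\inalp)$ deliver the leading coefficient $\frac{2}{\pi}\Gamma(1-\inalp)$. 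The remainder $t^{\inalp}\E{(\abs{X_1}-\ld t^{-\inalp})^+}$ is bounded via the tail estimate in \eqref{tcomp}, which gives $O(t)$ for $1<\alpha<2$; for $\alpha=2$ the explicit Gaussian tail is exponentially small and trivially $O(t^{3/2})$. For $\alpha=1$ the Cauchy kernel \eqref{Cauchyk} in $d=1$ specializes to $p^{(1)}_t(w)=(t/\pi)/(t^2+w^2)$, and the two elementary integrals $2\int_0^\ld w\,p^{(1)}_t(w)dw$ and $2\ld\int_\ld^\infty p^{(1)}_t(w)dw$ give the closed form in (ii) after using $\pi/2-\arctan(\ld/t)=\arctan(t/\ld)$.

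\textbf{Case (iii).} For $\alpr$ with $\alpha<1$ I use the convergent series
$$\palp(z)=\frac{1}{\pi}\sum_{n=1}^\infty(-1)^{n-1}\frac{\Gamma(n\alpha+1)\sin(\pi n\alpha/2)}{n!}\,t^n\abs{z}^{-n\alpha-1},\qquad z\neq 0,$$
whose convergence for every $z>0$ follows from the ratio test (the ratio behaves like $\alpha^\alpha n^{\alpha-1}t\abs{z}^{-\alpha}\to 0$ when $\alpha<1$). Inserting this into $2\ld\int_\ld^\infty\palp(z)dz$ and integrating termwise (justified by uniform convergence on $[\ld,\infty)$) gives a convergent series with $n$-th coefficient $\frac{2}{\pi}(-1)^{n-1}\frac{\Gamma(n\alpha)\sin(\pi n\alpha/2)}{n!}\ld^{1-n\alpha}t^n$. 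For the other piece $2\int_0^\ld z\palp(z)dz$, I rescale $u=zt^{-\inalp}$ to obtain $2t^{\inalp}\int_0^{M}u\,p^{(\alpha)}_1(u)du$ with $M=\ld t^{-\inalp}$, and expand as $M\to\infty$ by splitting $[0,M]=[0,A]\cup[A,M]$ for a fixed $A>0$ and applying the series only on $[A,M]$. Terms with $n<1/\alpha$ contribute divergent pieces $M^{1-n\alpha}/(1-n\alpha)$ which, multiplied by $t^{\inalp}$, become $t^n\ld^{1-n\alpha}$; the remaining $A$-independent, absolutely convergent combination (a regularization of $\int_0^\infty u p^{(\alpha)}_1(u)du$) produces the constant $C_\alpha$ in front of $t^{\inalp}$. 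Combining the two expansions and using $\frac{n\alpha}{1-n\alpha}+1=\frac{1}{1-n\alpha}$ for $n<1/\alpha$ yields the coefficients in (iv). In case (v) with $\alpha=1/N$, the critical index $n=N$ produces $\int u^{-1}du=\ln M=\ln\ld-(1/\alpha)\ln t$; multiplication by $t^N=t^{\inalp}$ and the computation $\Gamma(2)\sin(\pi/2)/N!=1/N!$ give the $(-1)^{N-1}\frac{2}{\pi(N-1)!}\,t^N\ln(1/t)$ term, while the $\ln\ld$ contribution is absorbed into the $\Omega$-dependent constant $C_N(\Omega)$.

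\textbf{Main obstacle.} The delicate part is the rigorous termwise integration and the sharp remainder bounds in case (iii). One must (a) establish the uniform convergence of the series away from $0$ to justify termwise integration on $[A,\infty)$, (b) bound the truncated tail uniformly via \eqref{tcomp} to produce the $t^{[1/\alpha]+1}$ remainder, and (c) verify that the regularized constant $C_\alpha$ is independent of the auxiliary cut-off $A$ (and hence of $\Omega$). The restriction $0<t<\min\{\ld^\alpha,e^{-1}\}$ is precisely what ensures $M>1$ and $\ln(1/t)>0$, so that the expansion has the stated form; the logarithmic term in (v) appears exactly at $\alpha=1/N$ because the factor $(1-n\alpha)^{-1}$ develops a pole at $n=N$.
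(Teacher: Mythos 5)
Your proposal is correct and follows essentially the same route as the paper. The paper starts from $\Hset{\Omega}{\Omega^c}=2t^{\inalp}\int_0^{\ld t^{-\inalp}}\Prob(w\le X_1)\,dw$, then applies Fubini to the resulting double integral; your reduction to $2\int_0^\infty\palp(w)\min(w,\ld)\,dw=\E{\min(\abs{X_t},\ld)}$ is the same object obtained in one step and is slightly cleaner. In case (i) you split $\min(a,b)=a-(a-b)^+$; the paper equivalently isolates $\E{X_1,0\le X_1}$ and bounds the remainder via $j_\alpha(t)=\E{X_1,\ \ld t^{-\inalp}<X_1}$ using \eqref{tcomp} for $1<\alpha<2$ and the exact Gaussian tail for $\alpha=2$, so your remainder bounds coincide with the paper's. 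In case (iii) the paper splits the $w$-integral at the fixed cutoff $w=1$ and integrates the Sato series term by term, justifying absolute convergence by a root test with Stirling's formula; this is precisely your ``split at $A$'' argument with $A=1$, and your observation that the constant $C_\alpha$ is $A$-independent corresponds to the paper writing $C_\alpha=2(\int_0^1\Prob(w\le X_1)\,dw-r_\infty(1))$. Case (ii) is handled identically.

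The one genuine deviation is the evaluation of the leading coefficient. You obtain $\E{\abs{X_1}}=\frac{2}{\pi}\Gamma(1-\inalp)$ from the Fourier identity $\abs{x}=\frac{2}{\pi}\int_0^\infty(1-\cos x\xi)\xi^{-2}\,d\xi$ combined with the characteristic function \eqref{CharF.Proc}; the paper instead uses subordination \eqref{dbysub} together with the moment formula \eqref{gen.expt.S1} for $S_1^{1/2}$. Both are correct and yield the same constant. Your route avoids the subordinator entirely and is self-contained given \eqref{CharF.Proc}; the paper's route has the advantage of reusing the same subordination machinery that drives the proofs of Theorem 3 parts (i)–(ii) later on, which keeps the exposition uniform. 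In short: no gaps, and you may freely replace the paper's subordination computation of the leading term by your Fourier argument.
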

It is worth mentioning that the case $\alpha=2$ has been included in the above theorem because  to the best of our knowledge, $\myH^{(2)}_{\Omega}(t)$ has only been studied 
in detail when $\Omega$ is a domain of $\Rd$ with $d\geq 2$ omitting the elemental case when $\Omega$ is an interval with finite length.

We also notice that Theorem \ref{mth} ensures on one hand the existence of a non-zero function $h_{\alpha}(t)$ such
that
\begin{align}\label{lim1dim}
\lim\limits_{\tgo}\frac{\Hset{\Omega}{\Omega^c}-h_{\alpha}(t)}{t^{\inalp}}
\end{align}
exists for all $0<\alpha\leq 1$. On the other hand, for $1<\alpha\leq2$ the 
above limit also exists with $h_{\alpha}(t)=0$. 

 The upcoming Theorem \ref{2thm} will show that the preceding limit \eqref{lim1dim} also exists in higher dimensions for $1<\alpha<2$ whereas for $0<\alpha\leq1$ we are only able to obtain a weaker version of the statements $(ii)$
and $(iii)$ provided in Theorem \ref{mth}. For $\alpha=1$, it is worth noting that theorems \ref{mth} and \ref{2thm} indicate that $h_{1}(t)$ should be  equal to
$$\frac{1}{\pi}\,\,
\mathcal{H}^{d-1}(\partial \Omega)\,\,t\,\,\ln\pthesis{\frac{1}{t}}.$$
 The main difficulty here would consist in identifying the limit \eqref{lim1dim}.
%---------------------------------------------------------------------------------

We now continue to elaborate further in the observation  previously made. We point out that the factor $2$ which appears in the first term of each expansion in Theorem
\ref{mth} comes from the boundary points of the interval $(a,b)$ and 
by definition  \eqref{Haus}, we have $\mathcal{H}^{0}(\partial (a,b))=2$. With simple observation, we notice that part $(i)$ can be restated  as
\begin{align*}
\lim\limits_{\tgo}\frac{\Hset{\Omega}{\Omega^c}}{t^{\inalp}}=\frac{1}{\pi}\Gamma\pthesis{1-\inalp}\mathcal{H}^{0}(\bd).
\end{align*}
For $\alpha=2$, the above limit is the one-dimensional  analogue of $\eqref{hc2}$ with
the same constant which is not unusual since when dealing with a $d$-dimensional Brownian motion most of the computations reduce to the one  dimensional setting due to the independence of the components. However, for $0<\alpha<2$ the components are no longer independent and  an approach involving estimates of the heat kernels is required.

Because of the last considerations, we are led to conjecture that in higher dimensions we should expect to recover, with the first term of each expansion,  the Hausdorff measure of the boundary. Our Theorem \ref{2thm} asserts that the conjecture
is correct when $1\leq \alpha<2$ with the same constants as in  parts $(i)$ and $(ii)$ of Theorem \ref{mth}.   For $0<\alpha<1$,  the fractional $\alpha$-perimeter $\mathcal{P}_{\alpha}\pthesis{\Omega}$, defined to be
\begin{align}\label{alpper}
\myP_{\alpha}\pthesis{\Omega}=\int_{\Omega}\int_{\Omega^c}\frac{dx\,\,dy}{|x-y|^{d+\alpha}},
\end{align} 
is recovered.
The aforementioned quantity turns out to be linked with celebrated Hardy inequalities.  We refer the  reader to the papers of Z. Q. Chen, R. Song \cite{Song} and R. L. Frank, R. Seiringer \cite{Frank} for  further results involving this quantity. In fact, it is shown in \cite{Frank} that there  exists $\lambda_{d,\alpha}>0$ such that
\begin{align*}
|\Omega|^{(d-\alpha)/d}\leq \lambda_{d,\alpha}\,\myP_{\alpha}\pthesis{\Omega}
\end{align*}
with equality if and only if $\Omega$ is a ball.
It is also proved in \cite{FracP} that
\begin{align*}
\lim_{\alpha\downarrow 0}\alpha \myP_{\alpha}(\Omega)=d\,\,|B_1(0)|\,\,|\Omega|,\nonumber \\
\lim_{\alpha\uparrow 1}(1-\alpha)\myP_{\alpha}(\Omega)=K_{d}\,\,\mathcal{H}^{d-1}(\partial \Omega),
\end{align*}  
for some $K_d>0.$

It is interesting to notice that the last limit intuitively gives an insight  that the surface area of the boundary should be recovered when considering the small time behavior of the function $\myH_{\Omega, \Omega^c}^{(1)}(t)$(Cauchy process,  $\alpha=1$) which is exactly what our next result shows. 

\begin{thm}\label{2thm}
Assume   $\Omega\subset \Rd$, $d\geq 2$ is a uniformly $C^{1,1}$-regular  bounded domain.
\item[$(i)$] For $1<\alpha<2$, we have
\begin{align}\label{iso2}
\myH_{\Omega,\Omega^c}^{(\alpha)}(t)\leq 
\frac{1}{\pi}\,\Gamma\pthesis{1-\inalp}\,\mathcal{H}^{d-1}(\partial \Omega)\, t^{\inalp}
\end{align}
for all $t>0$. Moreover,
\begin{align}\label{Isoplim}
\lim\limits_{\tgo}\frac{\myH_{\Omega,\Omega^c}^{(\alpha)}(t)}{t^{\inalp} }= 
\frac{1}{\pi}\,\Gamma\pthesis{1-\inalp}\,\mathcal{H}^{d-1}(\partial \Omega).
\end{align}

\item[$(ii)$] For $\alpha=1$, 
\begin{align*}
\lim_{\tgo} \frac{ \myH^{(1)}_{\Omega, \Omega^c}(t)}{t\ln\pthesis{\frac{1}{t}}}=\frac{1}{\pi}\,\,
\mathcal{H}^{d-1}(\partial \Omega).
\end{align*}

\item[$(iii)$] For $0<\alpha<1$, 
\begin{align*}
\lim_{\tgo} \frac{ \myH^{(\alpha)}_{\Omega, \Omega^c}(t)}{t}=
A_{\alpha,d}\,\,\myP_{\alpha}(\Omega),
\end{align*}
with $A_{\alpha,d}$ and $\myP_{\alpha}(\Omega)$ as defined in \eqref{Adef} and \eqref{alpper}, respectively.
\end{thm}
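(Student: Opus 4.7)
The three cases of Theorem~\ref{2thm} require genuinely different tools. Part (iii) reduces to a single dominated-convergence argument. Writing
\[
\frac{\myH_{\Omega,\Omega^c}^{(\alpha)}(t)}{t}=\int_{\Omega}\int_{\Omega^c}\frac{\palp(x-y)}{t}\,dx\,dy,
\]
the pointwise limit \eqref{tlim} identifies the integrand's limit as $A_{\alpha,d}|x-y|^{-d-\alpha}$, while the right-hand bound in \eqref{tcomp} provides the $t$-independent majorant $c_{\alpha,d}|x-y|^{-d-\alpha}$. Because $\Omega$ is a bounded uniformly $C^{1,1}$ domain, its $R$-smoothness supplies the uniform interior ball condition needed to verify $\myP_{\alpha}(\Omega)<\infty$ for $0<\alpha<1$ (signed-distance coordinates reduce the singular part of the integral near $\bd$ to $\int_0^R r^{-\alpha}\,dr$, convergent precisely when $\alpha<1$). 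Dominated convergence then yields the stated limit $A_{\alpha,d}\myP_{\alpha}(\Omega)$.

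For part (i), I would invoke the subordination identity \eqref{dbysub}, which gives $\myH_{\Omega,\Omega^c}^{(\alpha)}(t)=\mathbb{E}\bigl[\myH_{\Omega,\Omega^c}^{(2)}(S_t)\bigr]$. Applying \eqref{isop} inside the expectation produces
\[
\myH_{\Omega,\Omega^c}^{(\alpha)}(t)\leq \frac{1}{\sqrt{\pi}}\,\mathcal{H}^{d-1}(\bd)\,\mathbb{E}[\sqrt{S_t}].
\]
From $\mathbb{E}[e^{-\lambda S_t}]=e^{-t\lambda^{\alpha/2}}$ combined with the Frullani-type identity $x^{1/2}=(2\sqrt{\pi})^{-1}\int_0^\infty(1-e^{-\lambda x})\lambda^{-3/2}\,d\lambda$, one obtains $\mathbb{E}[\sqrt{S_t}]=\pi^{-1/2}\Gamma(1-1/\alpha)\,t^{1/\alpha}$, finite precisely because $\alpha>1$; this establishes \eqref{iso2}. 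For the limit \eqref{Isoplim}, the scaling $S_t\eid t^{2/\alpha}S_1$ recasts
\[
\frac{\myH_{\Omega,\Omega^c}^{(\alpha)}(t)}{t^{1/\alpha}}=\mathbb{E}\!\left[\frac{\myH_{\Omega,\Omega^c}^{(2)}(t^{2/\alpha}S_1)}{(t^{2/\alpha}S_1)^{1/2}}\,\sqrt{S_1}\right].
\]
The sample-wise limit \eqref{hc2} drives the bracketed ratio to $\pi^{-1/2}\mathcal{H}^{d-1}(\bd)$ almost surely, and \eqref{isop} bounds this ratio uniformly by the same constant, so the whole integrand is dominated by the integrable $\pi^{-1/2}\mathcal{H}^{d-1}(\bd)\sqrt{S_1}$; dominated convergence and the moment formula yield \eqref{Isoplim}.

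Part (ii) is the crux. The subordination route collapses because $\mathbb{E}[\sqrt{S_t}]=\infty$ for the $1/2$-subordinator, so one must attack the explicit Cauchy kernel \eqref{Cauchyk} directly. I would split $\Omega$ into $\{\md\geq\delta\}$ and the tubular neighborhood $\{\md<\delta\}$ with $\delta<R$, and likewise on the $\Omega^c$ side; the far-from-boundary pieces contribute only $O(t)$ by \eqref{tcomp}. Inside the tube, I would parametrize $x=\sigma+r\,\nu(\sigma)$ and $y=\sigma'-r'\nu(\sigma')$ with $\sigma,\sigma'\in\bd$, $r,r'\in(0,\delta)$, and first integrate out the tangential separation $\sigma-\sigma'$: the classical identity that any hyperplane marginal of the $d$-dimensional Cauchy density is a one-dimensional Cauchy reduces the leading contribution to
\[
\mathcal{H}^{d-1}(\bd)\int_0^{\delta}\!\!\int_0^{\delta}\frac{t}{\pi\bigl(t^2+(r+r')^2\bigr)}\,dr\,dr'.
\]
The substitution $s=r+r'$ evaluates this explicitly and yields $\pi^{-1}\mathcal{H}^{d-1}(\bd)\,t\ln(1/t)+O(t)$, which divided by $t\ln(1/t)$ gives the desired limit.

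The main obstacle is the tubular reduction in part (ii): controlling the curvature corrections to the flat identity above and the Jacobian of the $(\sigma,r)\mapsto x$ change of variables uniformly, so that every error stays $O(t)=o(t\ln(1/t))$. Here the uniform $C^{1,1}$ (equivalently $R$-smooth) hypothesis does the essential work, providing a uniform tubular neighborhood of $\bd$ with bi-Lipschitz signed-distance coordinates and uniformly bounded curvature of $\bd$.
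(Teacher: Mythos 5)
Your proposal matches the paper's strategy in all three parts: subordination combined with the Brownian result \eqref{hc2}--\eqref{isop} and dominated convergence for $1<\alpha<2$, the pointwise limit \eqref{tlim} with the majorant \eqref{tcomp} for $0<\alpha<1$, and for $\alpha=1$ a reduction to the half-space Cauchy kernel in tubular coordinates where the one-dimensional Cauchy marginal produces the $t\ln(1/t)$ factor. The only stylistic difference is in the $\alpha=1$ case, where the paper organizes the tubular reduction via a test-function lemma (Lemma \ref{ml}) glued with a partition of unity (Theorem \ref{2mth}) rather than integrating out the tangential separation directly, but the analytic content — the flat half-space computation plus $O(t)$ Jacobian and curvature corrections controlled by the uniform $C^{1,1}$ structure — is exactly what you describe.
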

%----------------------------------------------------------------------------------

The proof of $(i)$ is a consequence of  the Lebesgue Dominated Convergence Theorem 
and subordination techniques.  Part $(iii)$ is obtained by combining once again the Lebesgue Dominated Convergence Theorem with \eqref{tlim}. The case $\alpha=1$ requires a more elaborate  approach.

We next establish some  connections between $\Hset{\Omega}{\Omega^c}$ 
and  the spectral heat content of $\Omega$ which  has only  been widely studied for the Brownian motion case.  Denote by 
 \begin{equation*}\label{exittime}
\tau_{\Omega}^{(\alpha)}=\inf\set{s\geq 0:X_s\in \Omega^c},
\end{equation*}   
the first exit time from $\Omega$. The \textit{spectral heat content} of $\Omega$, denoted by $Q_{\Omega}^{(\alpha)}(t)$,
is defined as
\begin{align}\label{shc}
{Q}_{\Omega}^{(\alpha)}(t)=\int_{\Omega}dx\,\int_{\Omega}dy\,\,p^{\Omega,\alpha}_t(x,y),
\end{align}
where $p^{\Omega,\alpha}_t(x,y)$ is the transition density for the stable process killed upon exiting $\Omega$. More precisely, this is the heat kernel for the Dirichlet fractional Laplacian.  An explicit expression is given by 
\begin{equation}\label{tran.den.dom}
p^{\Omega,\alpha}_t(x,y)=\palp(x,y)\,\,\mathbb{P}\left(\tau_{\Omega}^{(\alpha)}>t \,\,|\,\, X_0=x,\,\,X_t=y\right).
\end{equation}
The name {\it spectral heat content}  given to $Q_{\Omega}^{(\alpha)}(t)$ comes from the fact that $p^{\Omega,\alpha}_t(x,y)$ can be written in terms of the eigenvalues and eigenfunctions of the domain 
 $\Omega$. That is, when  $|\Omega|<\infty$, it is known (see \cite{Davies} for details) that there exists an orthonormal basis of eigenfunctions $\{\phi_n \}_{n\in \mathbb{N}}$ for $L^2(\Omega)$  with  corresponding eigenvalues $\{\lambda_n \}_{n \in \mathbb{N}}$
satisfying $0 < \lambda_1 < \lambda_2 \leq \lambda_3 \leq . . .$
and  $\lambda_n \rightarrow \infty$ as $n \rightarrow \infty$ such that
\begin{equation}\label{spect.dec.palp}
p^{\Omega,\alpha}_t(x,y)=\mysum{n}{1}{\infty}e^{-t\lambda_n}\,\phi_{n}(x)\,\phi_n(y).
\end{equation}

Notice that due to \eqref{shc} and the last equality, we obtain an expression for 
$Q_{\Omega}^{(\alpha)}(t)$ involving both the spectrum $\set{ \lambda_n}_{ n\in \mathbb{N}}$ and  eigenfunctions $\set{\phi_{n}}_{n\in \mathbb{N}}$. Namely,
$$Q_{\Omega}^{(\alpha)}(t)=\mysum{n}{1}{\infty}e^{-t\lambda_{n}}\pthesis{\int_{\Omega}dx\,\phi_n(x)}^2.$$

We remark for the sake of completeness that by mimicking the proof provided in \cite[Proposition 1.4]{van2}, we have
$$Q_{\Omega}^{(\alpha)}(t)=e^{-\lambda_1t}\pthesis{ ||\phi_1||_1^2+\mathcal{O}(t^{-d/\alpha})}, \,\,t\uparrow\infty.$$ 
 Henceforth, we will only be concerned about   the behavior of $Q_{\Omega}^{(\alpha)}(t)$ as $\tgo$.
 
 The study of the small time behavior of the  spectral heat content 
$Q_{\Omega}^{(\alpha)}(t)$ arises from  the results  associated with the asymptotic expansion of the heat trace for smooth domains. The heat trace
of a bounded domain  $\Omega$ is defined to be
\begin{align*}
\mathcal{Z}_{\Omega}^{\,(\alpha)}(t)=\frac{1}{p_t^{(\alpha)}(0)}\int_{\Omega}\,dx\,p_t^{\Omega, \alpha}(x,x)=\frac{1}{p_t^{(\alpha)}(0)} \mysum{n}{1}{\infty}e^{-\lambda_n t},
\end{align*}
where the second equality is obtained by means of \eqref{spect.dec.palp}. In \cite{BanKul}, R. Ba\~nuelos and T. Kulczycki provide the following second order expansion of the heat trace  for $R$-smooth boundary domains which holds  for every $0<\alpha\leq2$ (the case $\alpha=2$ was proved  in \cite{van5} by M. van der Berg).  
\begin{align}
\mathcal{Z}_{\Omega}^{\,(\alpha)}(t)=
\abs{\Omega} -C_{d,\alpha}\,\mathcal{H}^{d-1}(\bd)\,t^{\inalp}+\mathcal{O}(t^{\frac{2}{\alpha}}),
\end{align}
as $\tgo$, where $C_{d,\alpha}>0$ admits a probabilistic representation in terms of the exit time from  the upper half--plane of the underlying $\alpha$--stable process. This result was extended by R.  Ba\~nuelos, T. Kulczycki and  B. Siudeja to domains with Lipschitz boundaries in \cite{BanKulSiu}.
It is interesting to note  that the above expansion for $0<\alpha<2$ was motivated by scaling and keeping in mind the behavior of the heat trace for the Brownian motion.  Based on this, it is natural to predict the second order expansion  of $Q_{\Omega}^{(\alpha)}(t)$ by considering as a model the spectral heat content of the Brownian motion $Q_{\Omega}^{(2)}(t)$. To our surprise (as we shall see below)  $Q_{\Omega}^{(2)}(t)$ only models the behavior of  $Q_{\Omega}^{(\alpha)}(t)$ for the cases $1<\alpha<2$.

The small time asymptotic behavior of 
$Q_{\Omega}^{(\alpha)}(t)$ is known so far only for $\alpha=2$. In fact, the following result was proved by M. van den Berg and J. F. Le Gall in  \cite{van2} for smooth domains $\Omega\subset \R^d$, $d\geq 2$. 
\begin{equation}\label{VandenberHeatContBM}
Q_{\Omega}^{(2)}(t)=|\Omega|-\frac{2}{\sqrt{\pi}}\mathcal{H}^{d-1}(\bd)\,t^{1/2}+\left(\frac{d-1}{2}\int_{\bd}\mathcal{M}(\sigma)d\sigma\right) t +\mathcal{O}(t^{3/2}),  
\end{equation}
as $t\downarrow 0$.  Here, $\mathcal{M}(\sigma)$ denotes the mean curvature at the point $\sigma\in \bd$.  
For more on the heat content asymptotics and its connections to the eigenvalues (spectrum)  of the Laplacian in the domain $\Omega$, we direct the reader to Gilkey's monograph \cite{Gil1} and to M. van den Berg, E. B. Dryden and T. Kappeler \cite{vandenDryKap} and the many references to the literature contained therein.  We also refer the reader to \cite{van4} for matters related to the spectral heat content  of the Brownian motion for regions with a fractal boundary.
%---------------------------------------------------------------------------------------------------------------------------------------------------

For  $\Omega$ a uniformly $C^{1,1}$--regular bounded domain is  known, according to
\cite[Corollary 1]{Bog2}  that there exists $c>0$ such that
\begin{align*}
c^{-1}\min\set{1, \frac{\rho^{\alpha/2}_{\Omega}(x)}{\sqrt{t}}}\leq \int_{\Omega}dy\,\,p^{\Omega,\alpha}_t(x,y)\leq
c \min\set{1, \frac{\rho^{\alpha/2}_{\Omega}(x)}{\sqrt{t}}},
\end{align*}
for all $x\in \Omega$ and $0<t \leq 1$. Here, $\md$ represents the distance from $x$ to 
the boundary of $\Omega$.  Therefore, for bounded domains $\Omega$ with smooth boundary $\bd$, it is possible to  prove by using the techniques developed in \cite{van2} that
\begin{align}\label{conjeture}
\int_{\Omega} dx \,\min\set{1, \frac{\rho^{\alpha/2}_{\Omega}(x)}{\sqrt{t}}}=\abs{\Omega}-
C_{\alpha}\mathcal{H}^{d-1}(\bd)\,\, t^{\inalp}+
\mathcal{O}(t^{\frac{2}{\alpha}}),
\end{align}
as $\tgo$ for some $C_{\alpha}>0$. Hence,  based on the preceding expansion and the 
small time expansion  \eqref{VandenberHeatContBM} for the Brownian motion, we are led to conjecture that
a  similar asymptotic expansion to the right hand side of \eqref{conjeture} should
also hold for  $Q_{\Omega}^{(\alpha)}(t)$. However, the following theorem  asserts that such a conjecture may only hold for  $1<\alpha<2$. 
\begin{thm}\label{mcor}
 Assume   $\Omega\subset \Rd$, $d\geq 2$ is a uniformly $C^{1,1}$-regular  bounded  domain.
\begin{enumerate}
\item[$(i)$] Let $1<\alpha<2$. Then,  we have
\begin{align*}
\frac{1}{\pi}\Gamma\pthesis{1-\inalp}
\mathcal{H}^{d-1}(\bd)&\leq
\varliminf\limits_{\tgo}\frac{|\Omega|-Q_{\Omega}^{(\alpha)}(t)}{t^{\inalp}}
\\
&\leq \varlimsup\limits_{\tgo}\frac{|\Omega|-Q_{\Omega}^{(\alpha)}(t)}{t^{\inalp}}
\leq 2^{(3d+1)/2}\,\Gamma\pthesis{1-\inalp}\mathcal{H}^{d-1}(\bd).
\end{align*}
\item[$(ii)$] For $\alpha=1$, we obtain
\begin{align*}
\frac{1}{\pi} \,\,\mathcal{H}^{d-1}(\bd)&\leq 
\varliminf\limits_{\tgo} 
\frac{|\Omega|-Q_{\Omega}^{(1)}(t)}{t\,\ln\pthesis{\frac{1}{t}}}\\
&\leq \varlimsup\limits_{\tgo} 
\frac{|\Omega|-Q_{\Omega}^{(1)}(t)}{t\,\ln\pthesis{\frac{1}{t}}}\leq\,
2^{(3d+1)/2}\,\,\mathcal{H}^{d-1}(\bd).
\end{align*}
\item[$(iii)$] For $0<\alpha<1$, there exists a positive constant $C_{d,\alpha}$ such that
\begin{align*}
A_{d,\alpha}\,\myP_{\alpha}(\Omega)&\leq 
\varliminf\limits_{\tgo} 
\frac{|\Omega|-Q_{\Omega}^{(\alpha)}(t)}{t} \\
&\leq \varlimsup\limits_{\tgo} 
\frac{|\Omega|-Q_{\Omega}^{(\alpha)}(t)}{t}\leq \,C_{d,\alpha}
\,\int_{\Omega}\,dx\, \rho^{-\alpha}_{\Omega}(x),
\end{align*} 
where $\rho_{\Omega}(x)=\inf\set{\abs{\sigma-x}:\sigma\in \bd}$. Moreover, if $\Omega$ satisfies a uniform  exterior volume condition, the quantity $\int_{\Omega}\,dx\, \rho^{-\alpha}_{\Omega}(x)$ can be replaced up to some positive constant by $\myP_{\alpha}(\Omega)$.
 
Here $A_{\alpha,d}$ and  $\myP_{\alpha}(\Omega)$ as defined in \eqref{Adef} and \eqref{alpper} respectively. 
\end{enumerate}
\end{thm}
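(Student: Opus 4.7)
The plan is to sandwich $|\Omega|-Q_{\Omega}^{(\alpha)}(t)$ between $\myH_{\Omega,\Omega^c}^{(\alpha)}(t)$ and a controllable integral involving $\md^{-\alpha}$, and then pass to $\tgo$ using Theorem~\ref{2thm} at both ends. The lower bound in each of (i)--(iii) is immediate from the inclusion $\set{X_t\in\Omega^c}\subseteq\set{\tau_{\Omega}^{(\alpha)}\le t}$, which yields
\[
\myH_{\Omega,\Omega^c}^{(\alpha)}(t)=\int_{\Omega}\Prob^x(X_t\in\Omega^c)\,dx\le\int_{\Omega}\Prob^x(\tau_{\Omega}^{(\alpha)}\le t)\,dx=|\Omega|-Q_{\Omega}^{(\alpha)}(t);
\]
dividing by $t^{\inalp}$, $t\ln(1/t)$, or $t$ and invoking parts (i)--(iii) of Theorem~\ref{2thm}, respectively, gives the three liminfs.

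For the upper bounds the core input is a Pruitt-type pointwise estimate
\[
\Prob^x\pthesis{\tau_{\Omega}^{(\alpha)}\le t}\le\min\set{1,\,2^{(3d+1)/2}\,t\,\md^{-\alpha}},\qquad x\in\Omega,\ t>0,
\]
which I would derive by noting that $B(x,\md)\subset\Omega$ forces $\tau_{\Omega}^{(\alpha)}\ge\tau_{B(x,\md)}^{(\alpha)}$, and then applying Lévy's reflection inequality for symmetric one-dimensional Lévy processes to each coordinate projection $X_s^{(i)}-x^{(i)}$ (which is a one-dimensional symmetric $\alpha$-stable process by rotational invariance), together with the marginal tail furnished by \eqref{tcomp}. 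Case (iii) then follows at once: $|\Omega|-Q_{\Omega}^{(\alpha)}(t)\le 2^{(3d+1)/2}\,t\int_{\Omega}\md^{-\alpha}\,dx$, and when $\Omega$ satisfies the uniform exterior volume condition the estimate $|B(x,\md/2)\cap\Omega^c|\gtrsim\md^d$ gives $\md^{-\alpha}\lesssim\int_{\Omega^c}|x-y|^{-d-\alpha}\,dy$, so integrating in $x$ converts $\int_{\Omega}\md^{-\alpha}\,dx$ into a constant multiple of $\myP_{\alpha}(\Omega)$. For (i) and (ii) I split $\Omega$ into $\set{\md<t^{\inalp}}$ and its complement; the $C^{1,1}$ tubular neighborhood theorem yields $|\set{\md<s}|=s\,\mathcal{H}^{d-1}(\bd)(1+o(1))$ as $s\downarrow 0$, while coarea handles $t\int_{\set{\md\ge t^{\inalp}}}\md^{-\alpha}\,dx$, which is of order $t^{\inalp}\,\mathcal{H}^{d-1}(\bd)$ for $1<\alpha<2$ and of order $t\ln(1/t)\,\mathcal{H}^{d-1}(\bd)$ for $\alpha=1$. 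The $\Gamma(1-\inalp)$ prefactor is then identified by matching with the leading coefficient of $\myH_{\Omega,\Omega^c}^{(\alpha)}(t)$ supplied by Theorem~\ref{2thm}.

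The principal obstacle will be establishing the Pruitt-type inequality with the explicit constant $2^{(3d+1)/2}$: because the coordinate projections of the isotropic $\alpha$-stable process are not independent when $\alpha<2$, transferring the one-dimensional Lévy reflection inequality to the $d$-dimensional running supremum requires a careful union-bound argument together with the tail of the marginal one-dimensional stable density, and the dimensional constant must be tracked sharply enough through this chain to produce the stated factor after combining with the $\frac{1}{\pi}\,\Gamma(1-\inalp)\,\mathcal{H}^{d-1}(\bd)$ coefficient from Theorem~\ref{2thm}. A secondary difficulty is the geometric argument in (iii) that the uniform exterior volume condition permits replacement of $\int_{\Omega}\md^{-\alpha}\,dx$ by a multiple of $\myP_{\alpha}(\Omega)$.
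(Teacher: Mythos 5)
Your lower bound is the paper's argument verbatim (the inclusion $\{X_t\in\Omega^c\}\subset\{\tau_\Omega^{(\alpha)}\le t\}$, equivalently $Q_\Omega^{(\alpha)}(t)\le\mathbb H_\Omega^{(\alpha)}(t)$, followed by Theorem \ref{2thm}), so that part is fine. Your upper bound, however, is not the paper's route and has two genuine gaps.

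First, the paper does not use a Pruitt-type bound $\Prob^x(\tau_\Omega^{(\alpha)}\le t)\le C\,t\,\rho_\Omega(x)^{-\alpha}$ at all. Instead it bounds $\tau_\Omega^{(\alpha)}$ below by $\tau_{B_{\rho_\Omega(x)}(x)}^{(\alpha)}$ and then invokes Song--Vondra\v{c}ek's subordination comparison $\Prob^x\bigl(\tau_D^{(\alpha)}\le t\bigr)\le \Prob^x\bigl(\tau_D^{(2)}\le S_t\bigr)$ for balls, followed by van den Berg--Le Gall's explicit Gaussian exit-time tail $\Prob_{\mathbf B}\bigl(\tau_{B_r(0)}^{(2)}<s\bigr)\le 2^{(d+2)/2}\exp(-r^2/8s)$. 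The factor $2^{(3d+1)/2}$ is the product of that $2^{(d+2)/2}$ with $2^{(2d-1)/2}$, the latter coming from the parallel-surface bound $\mathcal H^{d-1}(\partial\Omega_r)\le 2^{d-1}\mathcal H^{d-1}(\partial\Omega)$ and the $\sqrt{8}$ change of variables; it is not a union-bound/reflection constant, and I do not see how your coordinate-projection argument could reproduce it.

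Second, and more fundamentally: even if you somehow established $\Prob^x(\tau_\Omega^{(\alpha)}\le t)\le\min\{1,\,2^{(3d+1)/2}t\rho_\Omega(x)^{-\alpha}\}$, integrating this over $\Omega$ would not produce the factor $\Gamma(1-\tfrac1\alpha)$. For $1<\alpha<2$ a coarea computation of $\int_\Omega\min\{1, At\rho_\Omega^{-\alpha}\}\,dx$ gives a leading coefficient of the form $c\cdot\frac{A}{\alpha-1}$, not $A\,\Gamma\bigl(\tfrac{\alpha-1}{\alpha}\bigr)$; these differ (e.g.\ at $\alpha=3/2$, $\tfrac1{\alpha-1}=2$ but $\Gamma(1/3)\approx 2.68$). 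You cannot ``identify the $\Gamma(1-\tfrac1\alpha)$ prefactor by matching with $\mathbb H_{\Omega,\Omega^c}^{(\alpha)}$'' --- an upper bound yields whatever constant it yields; matching with the lower-bound coefficient is not a legitimate step, and the statement precisely asserts a sandwich whose two constants do \emph{not} match. In the paper the $\Gamma(1-\tfrac1\alpha)$ arises from the subordinator identity $\int_0^\infty\mathbb E[e^{-w^2/S_1}]\,dw=\tfrac12\sqrt\pi\,\mathbb E[S_1^{1/2}]=\tfrac12\Gamma(1-\tfrac1\alpha)$, and its $\alpha=1$ analogue $\mathbb E[e^{-w^2/S_1}]=(4w^2+1)^{-1/2}$ yields the $\ln(1/t)$; your plan has no mechanism to produce these exact quantities. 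Your treatment of (iii) and of the exterior-volume replacement is fine and matches the paper, but the upper bounds in (i) and (ii) as proposed would not reach the stated constants without the subordination comparison.
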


The lower bounds obtained in the foregoing theorem  are an easy consequence of applying Theorem \ref{2thm} together with
the following inequality which arises from equality \eqref{tran.den.dom}(where we have appealed to the fact that the conditional probability is bounded by $1$) and it relates the heat content $\Halp$ with  the spectral heat content $Q_{\Omega}^{(\alpha)}(t)$ as follows.
\begin{align*} 
Q_{\Omega}^{(\alpha)}(t)\leq \Halp=|\Omega|-\Hset{\Omega}{\Omega^c},
\end{align*}
for all $t>0$. On the other hand, the upper bounds require  a more delicate treatment where the $\alpha/2$-subordinator  plays a relevant role.
Based on the preceding estimates and the last theorem, we state  the following conjecture
about the small time behavior for the spectral heat content of $\Omega$.
\\

{\it {\bf Conjecture}}
\begin{enumerate}
\item[$(i)$] For $1<\alpha<2$, there exists $C_{d,\alpha}>0$ such that
\begin{align*}
Q_{\Omega}^{(\alpha)}(t)&=\abs{\Omega}-C_{d,\alpha}\,
\mathcal{H}^{d-1}(\bd) \, t^{\inalp} +\mathcal{O}(t),\,\, \tgo.
\end{align*}

\item[$(ii)$] For $\alpha=1$, there exists $C_{d}>0$ such that
\begin{align*}
\hspace{7mm}Q_{\Omega}^{(\alpha)}(t)&=\abs{\Omega}-C_{d}\,
\mathcal{H}^{d-1}(\bd) \, t\,\ln\pthesis{\frac{1}{t}} +\mathcal{O}(t), \,\, \tgo.
\end{align*}

\item[$(iii)$] For $0<\alpha<1$, there exists $C_{d,\alpha}>0$ such that
\begin{align*}
Q_{\Omega}^{(\alpha)}(t)&=\abs{\Omega}-C_{d,\alpha}\myP_{\alpha}(\Omega) \, t +o(t),\,\, \tgo.
\end{align*}
%as $\tgo$.
\end{enumerate}

The rest of the paper is organized as follows. In \S \ref{sec:T1}, we provide the proof of Theorem \ref{mth}. In \S \ref{sec:T2}, we  show part
$(i)$ of Theorem \ref{2thm} by means of subordination techniques.
In \S \ref{sec:Cauchy}, we develop some machinery  for 
the Cauchy heat kernel and half-planes and finish the proof of Theorem \ref{2thm}. Finally, in \S \ref{sec: ub}, the proof
of Theorem \ref{mcor} is given where the $\alpha/2$-subordinator
${\bf S}$ plays a crucial role.

%---------------------------------------------------------------------------------

\section{proof of theorem \ref{mth}} \label{sec:T1}
We will begin this section by presenting some fundamental properties about the  $\alpha/2$--subordinator ${\bf S}=\set{S_t}_{t\geq0}$.  
\begin{prop}\label{factssubor}
\hspace*{20mm}
\begin{enumerate} 
\item[$(i)$] For all $\lambda,t >0$,
\begin{equation*}\label{Lap.sub}
 \E{\exp\pthesis{-\lambda S_{t}}}=\exp\pthesis{ -t\lambda^{\alpha/2}}.
\end{equation*}
\item[$(ii)$] For all $-\infty <\beta< \frac{\alpha}{2}$, 
\begin{equation}\label{gen.expt.S1}
\E{S_{1}^{\beta}}=\int_{0}^{\infty}\,ds\,s^{\beta}\,\eta^{(\alpha/2)}_1(s)=\frac{\Gamma(1-\frac{2\beta}{\alpha})}{\Gamma(1-\beta)}.
\end{equation}
\item[$(iii)$]
Let $\kappa>0$. Then, there exists $C_{\alpha}>0$ such that
\begin{equation}\label{basicupperest}
\E{\exp\pthesis{-\frac{\kappa^2}{ S_1}}}\leq C_{\alpha}\,\,\kappa^{-\alpha}.
\end{equation}
\end{enumerate}
\end{prop}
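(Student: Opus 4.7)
My three-part plan:

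For part $(i)$, the identity is the defining Laplace characterization of an $\alpha/2$-stable subordinator. From the L\'evy--Khintchine representation, ${\bf S}$ has Laplace exponent $\lambda\mapsto \lambda^{\alpha/2}$, whence $\E{e^{-\lambda S_t}}=e^{-t\lambda^{\alpha/2}}$. Equivalently, this can be read off from the subordination relation \eqref{dbysub} applied to the Gaussian kernel and Fourier transformed: \eqref{CharF.Proc} forces $\int_0^{\infty} e^{-s|\xi|^2}\eta_t^{(\alpha/2)}(s)\,ds = e^{-t|\xi|^\alpha}$, and setting $|\xi|^2=\lambda$ gives the claim.

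For part $(ii)$, the plan is to swap $s^\beta$ for an exponential and apply Fubini. For $\beta<0$ use
\[ s^\beta = \frac{1}{\Gamma(-\beta)} \int_0^\infty \lambda^{-\beta-1} e^{-\lambda s}\, d\lambda, \qquad s>0, \]
while for $0 < \beta < \alpha/2$ use the variant
\[ s^\beta = \frac{\beta}{\Gamma(1-\beta)} \int_0^\infty (1-e^{-\lambda s})\lambda^{-\beta-1}\, d\lambda, \qquad s>0, \]
whose factor $1-e^{-\lambda s}$ secures integrability at $\lambda=0$ in this range. Fubini converts $\E{S_1^\beta}$ into a single Laplace-transform integral against $\E{e^{-\lambda S_1}}=e^{-\lambda^{\alpha/2}}$ by $(i)$, and the substitution $u=\lambda^{\alpha/2}$ reduces the remaining integral to $\Gamma(1-2\beta/\alpha)/\Gamma(1-\beta)$ after one application of the functional equation $\Gamma(x+1)=x\Gamma(x)$. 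The case $\beta=0$ is trivial.

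Part $(iii)$ is the most delicate. Writing the expectation as the integral against $\eta_1^{(\alpha/2)}$ and substituting $u=\kappa^2/s$ yields
\[ \E{e^{-\kappa^2/S_1}} = \kappa^2 \int_0^\infty e^{-u}\, \eta_1^{(\alpha/2)}(\kappa^2/u)\, u^{-2}\, du. \]
The standard uniform bound $\eta_1^{(\alpha/2)}(r)\leq C_\alpha r^{-1-\alpha/2}$ for every $r>0$---a consequence of boundedness of the smooth density at finite scales combined with the heavy-tail asymptotic $\eta_1^{(\alpha/2)}(r)\sim c_\alpha r^{-1-\alpha/2}$ as $r\to\infty$---then reduces the right-hand side to $C_\alpha \kappa^{-\alpha} \int_0^\infty e^{-u} u^{\alpha/2-1}\,du = C_\alpha \Gamma(\alpha/2)\kappa^{-\alpha}$. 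The one genuinely delicate point is this uniform control on $\eta_1^{(\alpha/2)}$; a subordination-only alternative that avoids it is to combine \eqref{dbysub} with \eqref{tcomp} in dimension $d=1$ to deduce $\int_0^\infty s^{-1/2}\,e^{-\kappa^2/s}\, \eta_1^{(\alpha/2)}(s)\, ds\leq C\kappa^{-1-\alpha}$, split $\int_0^\infty e^{-\kappa^2/s}\,\eta_1^{(\alpha/2)}(s)\,ds$ at $s=\kappa^2$ (using $1\leq \kappa/\sqrt{s}$ on the small-$s$ range), and bound the large-$s$ tail by the heavy-tail probability $\Prob(S_1>\kappa^2)\leq C\kappa^{-\alpha}$.
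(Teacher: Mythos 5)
Your argument is correct, and for part $(iii)$ it is essentially the paper's proof: both apply the pointwise bound $\eta_1^{(\alpha/2)}(s)\leq C_\alpha\min\{1,s^{-1-\alpha/2}\}$ (which the paper cites from Bogdan) together with the change of variables $w=\kappa^2/s$ to arrive at $C_\alpha\Gamma(\alpha/2)\kappa^{-\alpha}$. For parts $(i)$ and $(ii)$ the paper simply refers the reader to an earlier reference, while you supply correct direct derivations via the Laplace exponent and a gamma-integral representation of $s^\beta$.
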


\begin{proof}
$(i)$ and $(ii)$ are standard facts so that we refer the reader to \cite{Acu} for the proofs.

Regarding $(iii)$, it is known (see \cite[p~97]{Bog}) that $\eta^{(\alpha/2)}_1(s)\leq C_0(\alpha)\min\set{1,s^{-1-\frac{\alpha}{2}}}$ for some $C_0(\alpha)>0$.
Hence, after a suitable change of variables, we arrive at
\begin{align*}
\E{\exp\pthesis{-\frac{\kappa^2}{ S_1}}}&\leq C_0(\alpha)\int_{0}^{\infty}\exp\pthesis{ -\kappa^2/s}
\min\set{1,s^{-1-\frac{\alpha}{2}}}ds \\
&\leq C_0(\alpha)\kappa^2 \int_{0}^{\infty}\exp\pthesis{-w}\set{\frac{w}{\kappa^2}}^{1+\frac{\alpha}{2}}\frac{dw}{w^2}=\frac{C_0(\alpha)\Gamma(\alpha/2)}{\kappa^{\alpha}}.
\end{align*}
Thus, the proof is complete by  taking $C_{\alpha}=C_0(\alpha)\Gamma(\alpha/2)$.
\end{proof}

In what follows,  we shall assume that $\Omega=(a,b)$, $-\infty<a<b<\infty$ with length $b-a=\ld$. We start  by expressing $\Hset{\Omega}{\Omega^c}$ in a more convenient form. For this purpose, we require the following two fundamental identities concerning the process ${\bf X}$ which can be easily deduced from the characteristic function \eqref{CharF.Proc}. 
\begin{align*}
\Prob^{x}\pthesis{  X_t \in A}&=
\Prob\pthesis{x-t^{\inalp}X_1\in A},\\ \nonumber
\Prob\pthesis{  X_t\in A}&=\Prob\pthesis{  -X_t\in A},
\end{align*}
for all $t>0$, $x \in \R^d$ and $A$ being a Borel measurable set in 
$\R^d$. In  particular, when $d=1$, we obtain 
\begin{align*}
\Prob^{x}\left(X_t\leq a\right)&=\Prob\left((x-a)t^{-\inalp} \leq X_1 \right),\\
\Prob^{x}\left(b\leq X_t\right)&=
\Prob\left( (b-x)t^{-\inalp}\leq X_1\right),
\end{align*} 
for all $x,a, b \in \R$ and $t>0$.
The last identities  in turn imply  that
\begin{align*}
\Hset{\Omega}{\Omega^c}
&=\int_{a}^{b}dx\,\left[\Prob^{x}\left(X_t\leq a\right)+\Prob^{x}\left(b\leq X_t\right)\right] \\
&=\int_{a}^{b}dx\,\,\Prob\left((x-a)t^{-\inalp}\leq X_1 \right)+
\int_{a}^{b}dx\,\,\Prob\left((b-x)t^{-\inalp}\leq X_1 \right).
\end{align*}
Next, a simple change of variables yields
\begin{align*}
\Hset{\Omega}{\Omega^c}=2\,t^{\inalp}\,\int_{0}^{\ld t^{-\inalp}}dw\,\, \Prob\left(w\leq X_1 \right),
\end{align*}
which shows  that $\Hset{\Omega}{\Omega^c}$ is  related to the tail behavior of the process ${\bf X}$. 

We set
\begin{align}\label{eldef}
\ele=\int_{0}^{\ld t^{-\inalp}}dw\,\,\Prob\left(w\leq X_1 \right).
\end{align}

{\bf Proof of Theorem \ref{mth}:} Since the tail behavior of the Brownian motion and  stable processes have an exponential and an algebraic decay at infinity, respectively, we need to treat the cases $1<\alpha\leq2$, $\alpha=1$  and $0<\alpha<1$ separately.
\bigskip

{\bf Case $1<\alpha\leq 2$}:
We rewrite $\ele$ as a double integral as follows.
\begin{align*}%\label{dintel}
\ele=\int_{0}^{\ld t^{-\inalp}}dw\int_{w}^{\infty}dz\,
p_1^{(\alpha)}(z).
\end{align*}

Thus, by interchanging the order of integration, we arrive at
\begin{align*}
\ele&=\int_{0}^{\ld t^{-\inalp}}dz\, p_1^{(\alpha)}(z)\int_{0}^{z}dw
+ \int_{\ld t^{-\inalp}}^{\infty}dz\, p_1^{(\alpha)}(z)\int_{0}^{\ld t^{-\inalp}}dw \\
&=\int_{0}^{\ld t^{-\inalp}}dz\,z\, p_1^{(\alpha)}(z)
+ \ld t^{-\inalp}\int_{\ld t^{-\inalp}}^{\infty}\,
dz\,p_1^{(\alpha)}(z).
\end{align*}

In probabilistic terms, we have shown that
\begin{align*}
\ele&=\E{X_1,0\leq X_1\leq \ld t^{-\inalp} } + 
 \ld t^{-\inalp}\,\Prob\pthesis{ \ld t^{-\inalp}< X_1}\\
 &=\E{X_1, 0\leq X_1} -\E{X_1, \ld t^{-\inalp}< X_1 } +
 \ld t^{-\inalp}\,\Prob\pthesis{\ld t^{-\inalp}< X_1}.
\end{align*}

Let us denote 
\begin{align}\label{rm}
j_{\alpha}(t)=\E{X_1, \ld t^{-\inalp}< X_1 }
\end{align}
and observe that
\begin{align*}
j_{\alpha}(t)\geq \ld t^{-\inalp}\,\Prob\pthesis{\ld t^{-\inalp}< X_1}.
\end{align*}
Thus, the remainder function $R_{\alpha}(t)$ defined as
$$R_{\alpha}(t)=2t^{\inalp}\pthesis{-\E{X_1, \ld t^{-\inalp}< X_1 } +
 \ld t^{-\inalp}\,\Prob\pthesis{\ld t^{-\inalp}\leq X_1}},$$
satisfies $\abs{R_{\alpha}(t)}\leq 4\,t^{\inalp}\,j_{\alpha}(t).$ Therefore, to finish the proof of  part $(i)$
of Theorem \ref{mth}, it suffices to obtain  upper bounds for the  function $j_{\alpha}(t)$ according to the cases $\alpha=2$ and $1<\alpha<2$.

\bigskip
{\bf Case $\alpha=2$}: It is clear from \eqref{rm} that  
\begin{align*}
j_{2}(t)&=(4\pi)^{-1/2}
\int_{ \ld t^{-1/2}}^{\infty}dz\,z\exp\left(-\frac{z^2}{4}\right)=\pi^{-1/2}\exp\left(-\frac{\ld^2}{4t}\right).
\end{align*}

Next, by applying the elementary inequality 
\begin{align*}
\exp(-x)\leq x^{-1},\,\, x>0,
\end{align*}
we conclude that $j_{2}(t)\leq 4\pi^{-1/2}\ld^{-2}t.$  Hence, we have shown that  
\begin{align*}
\myH_{\Omega,\Omega^c}^{(2)}(t)=2\,\E{X_1, 0\leq X_1}\,t^{1/2} + R_{2}(t),
\end{align*}
with $\abs{R_{2}(t)}\leq C\,t^{3/2}$ for all $t>0$.
\\

{\bf Case $1<\alpha<2$}: 
We observe  because of \eqref{tcomp} that for all $z\in\R\setminus\set{0}$ we have
\begin{align*}
p_1^{(\alpha)}(z)\leq c_{\alpha,1} \abs{z}^{-1-\alpha}
\end{align*} 
so that 
\begin{align*}
j_{\alpha}(t)\leq c_{\alpha,1}\int_{\ld t^{-\inalp}}^{\infty}dz\,
z^{-1-\alpha}\,z=c_{\alpha,1}(\alpha-1)^{-1}\ld^{1-\alpha}
t^{1-\inalp}.
\end{align*}
Thus, we arrive at
\begin{align*}
\Hset{\Omega}{\Omega^c}=2\,\E{X_1, 0\leq X_1}\,t^{\inalp}+ R_{\alpha}(t), 
\end{align*}
with $\abs{R_{\alpha}(t)}\leq C\,t$ for all $t>0$.
\begin{rmk}
By combining  \eqref{dbysub} and Fubini's Theorem, we  obtain for all $1<\alpha\leq 2$ that 
\begin{align*}
\E{X_{1},0\leq X_1}&=\int_{0}^{\infty}
dz\,z\,p_{1}^{(\alpha)}(z)=\int_{0}^{\infty}
dz\,z\,\E{p_{S_1}^{(2)}(z)}\\
&=\E{ \int_{0}^{\infty}dz\,z\,p_{S_1}^{(2)}(z)}
=\frac{1}{\sqrt{\pi}}\,\,\E{S_{1}^{1/2}}=\frac{1}{\pi}\,\,\Gamma\left(1-\frac{1}{\alpha}\right),
\end{align*}
where in the last equality we have appealed to formula \eqref{gen.expt.S1}.
\end{rmk}
%----------------------------------------------------------------------------------

We proceed to deal with Cauchy processes.
\bigskip

{\bf Case $\alpha=1$}: We begin by recalling some elementary calculus identities. 
\begin{align}
\arctan(w)+\arctan\pthesis{\frac{1}{w}}&=\frac{\pi}{2}, \label{ei1}\\
\label{ei2}
\int dw\,\arctan(w)=w\arctan(w)&-\frac{1}{2}\ln\left(1+w^2\right)+C.
\end{align}
 
By appealing to the above identities, the explicit expression of the  Cauchy heat kernel
\eqref{Cauchyk}  and \eqref{eldef}, we have
\begin{align*}
\ell_{1}(t)&=\int_{0}^{\ld t^{-1}}dw\,\int_{w}^{\infty}\frac{dz}{\pi(1+z^2)}
\\
&=\frac{1}{\pi}\pthesis{\frac{\pi}{2}\ld t^{-1}-
\int_{0}^{\ld\,\,t^{-1}}dw\,\,\arctan(w)}\\
&=\frac{1}{\pi}\pthesis{\frac{\pi}{2}\ld t^{-1}-\pint{\ld\,t^{-1}\,\arctan(\ld t^{-1})-\frac{1}{2}\ln\pthesis{1+\frac{\ld^2}{t^2}}}}\\
&=\frac{1}{\pi}\ln \pthesis{\frac{1}{t}}+\frac{1}{\pi}\pthesis{\ld\,t^{-1} \,\arctan\pthesis{\frac{t}{\ld}}+
\frac{1}{2}\ln\pthesis{t^{2}+\ld^{2}}}.
\end{align*}
Therefore, it follows from the above expression that
\begin{equation*}
\myH_{\Omega, \Omega^c}^{(1)}(t)=\frac{2}{\pi}\,t\ln\pthesis{\frac{1}{t}}
+\frac{2}{\pi}\pthesis{\ld \,\arctan\pthesis{\frac{t}{\ld}}+
\frac{1}{2}\,\,t\,\ln\pthesis{t^{2}+\ld^{2}}}, 
\end{equation*}
for all $t>0$ and this completes the proof of  part $(ii)$ of 
Theorem \ref{mth}.
\bigskip

{\bf Case $0<\alpha<1$}: Assume $0<t \leq \min\set{\ld^{\alpha}, e^{-1}}$.  In \cite[p.~88]{Sato}, the following power series representation is provided for the one--dimensional density function $p^{(\alpha)}_1(z)$ for any  $z>0$, $0<\alpha <1$.
\begin{align*}
p_1^{(\alpha)}(z)=\mysum{n}{1}{\infty}a_n(\alpha)z^{-1-n\alpha}
\end{align*}
with
$$a_n(\alpha)=(-1)^{n-1}\,\frac{\Gamma\pthesis{n\alpha+1}}{\pi\,n!}\sin\pthesis{\frac{\pi \,n\, \alpha}{2}}
.$$

Notice that by applying  Fubini's Theorem, we obtain for $w>0$
\begin{equation}\label{absconv}
\int_{w}^{\infty}dz\,\pthesis{\mysum{n}{1}{\infty}\abs{a_{n}(\alpha)}z^{-1-n\alpha}}=\mysum{n}{1}{\infty}\frac{\abs{a_{n}(\alpha)}}{n\alpha}\pthesis{\frac{1}{w^\alpha}}^n.
\end{equation}
By appealing to  the Stirling's formula
\begin{align}\label{asym1}
\lim\limits_{ n\rightarrow \infty}\frac{\Gamma(t_n+1)} { \sqrt{2\pi t_n}(t_ne^{-1})^{t_n}}=1,
\end{align}
for any increasing sequence $\set{t_n}_{n\in \mathbb{N}}$ of positive number converging to infinity, we can  prove that the series on the right hand side of the equality \eqref{absconv} is convergent for all $w>0$ since by the root test its radius of convergence is infinity. To see this, we note that
\begin{align}\label{ub}
\abs{a_{n}(\alpha)}\leq \frac{\Gamma\pthesis{n\alpha+1}}{n!},
\end{align}
which implies 
$$\varlimsup_{n\rightarrow \infty}\pthesis{\frac{\abs{a_n(\alpha)}}{n}}^{1/n}\leq \varlimsup_{n\rightarrow \infty}\pthesis{\frac{\Gamma\pthesis{n\alpha+1}}{n\,n!}}^{1/n}.$$
Thus, by using that $\Gamma(n+1)=n!$ and \eqref{asym1} (with $t_n=n\alpha$  and $t_n=n!$), we arrive at
\begin{align}\label{lim}
\varlimsup_{n\rightarrow \infty}\pthesis{\frac{\Gamma\pthesis{n\alpha+1}}{n\,n!}}^{1/n}&=\varlimsup_{n\rightarrow \infty}\pthesis{\frac{\sqrt{2\pi n\alpha}(n\alpha e^{-1})^{n\alpha}}{n\,\sqrt{2\pi n}(n e^{-1})^{n}}}^{1/n}\\ \nonumber
&=\varlimsup_{n\rightarrow \infty}
\pthesis{\frac{\sqrt{\alpha}}{n}}^{1/n}\frac{\alpha^{\alpha}e^{1-\alpha}}{n^{1-\alpha}}=0,
\end{align}
whenever $0<\alpha<1$.
Therefore, by using once more Fubini's Theorem, we have for $w>0$
\begin{align}\label{intsum1}
\int_{w}^{\infty}dz\,p_1^{(\alpha)}(z) =\mysum{n}{1}{\infty}\frac{a_{n}(\alpha)}{n\alpha}\pthesis{\frac{1}{w^\alpha}}^n.
\end{align}

Next, it is easy to show that
\begin{align}\label{ic}
\int_{1}^{\ld t^{-\inalp}}dw\,\int_{w}^{\infty}dz\,z^{-1-n\alpha}&= \pthesis{n\ln\pthesis{\frac{1}{t}} + \ln\pthesis{\ld}} \cdot \mathbbm{1}_{\set{n\alpha\,=1}}\\  \nonumber
&+\pthesis{\frac{\ld^{1-n\alpha}\,\,t^{n-\inalp}-1}{n\alpha\,\,(1-n\alpha)}}\cdot\mathbbm{1}_{\set{n\alpha\neq 1}}.
\end{align}

Before continuing, let us introduce some notation to simplify 
the formulas to appear below. For $m\in \mathbb{N}\cup\set{\infty}$, $t>0$
and $1/\alpha \notin \mathbb{N}$, we set 
\begin{align}\label{ps}
s_{m}(t)&=\mysum{n}{1}{m}\frac{a_n(\alpha)\ld^{1-n\alpha}\,t^{n}}{n\alpha (1-n\alpha)}, \,\,\,\,\,
r_{m}(t)=\mysum{n}{1}{m}\frac{a_n(\alpha)\,t^{n}} {n\alpha (1-n\alpha)}, \\ 
\tilde{s}_{m}(t)&=\mysum{n}{m}{\infty}\frac{a_n(\alpha)
\ld^{1-n\alpha}\,t^{n}}{n\alpha (1-n\alpha)},\,\,\,\,\,  \nonumber\tilde{r}_{m}(t)=\mysum{n}{m}{\infty}\frac{a_n(\alpha)\,t^{n}} {n\alpha (1-n\alpha)}.
\end{align}
 
These series are absolutely convergent for all $t>0$  since by using \eqref{ub} and \eqref{lim}, we obtain that
\begin{align*}
\varlimsup_{n\rightarrow\infty}
\pthesis{
\frac{ \abs{a_n(\alpha)}\ld^{1-n\alpha}}{n\alpha 
\abs{1-n\alpha}}}
^{1/n}=\varlimsup_{n\rightarrow\infty}
\pthesis{
\frac{ \abs{a_n(\alpha)}}{n\alpha 
\abs{1-n\alpha}}}
^{1/n}=0
\end{align*}
for all $0<\alpha<1$ and $1/\alpha \notin \mathbb{N}$.  We remark that $\tilde{s}_{m}(t)$ being absolutely convergence for all $t>0$ implies that if $1/\alpha\neq n$  for all $n\geq m$, then $M_{\alpha,m}=\sup\set{\frac{ \abs{a_n(\alpha)}\ld^{1-n\alpha}}{n\alpha 
\abs{1-n\alpha}}: n\geq m}<\infty$ and for all $0<t<\min\set{\ld^{\alpha}, e^{-1}}$, we  arrive at
\begin{align}\label{ineqfs}
\abs{\tilde{s}_{m}(t)}\leq M_{\alpha,m}\mysum{n}{m}{\infty}t^n=\frac{ M_{\alpha,m}\pthesis{\min\set{\ld^{\alpha}, e^{-1}}}^m}{1-\min\set{\ld^{\alpha}, e^{-1}}}.
\end{align}

As a result of the preceding  facts and the elementary tools
of calculus, we are allowed to interchange in \eqref{intsum1} the sum  with the integral sign over any compact set contained in $(0,\infty)$. Thus, if  $1/\alpha \notin \mathbb{N}$, we conclude by \eqref{ic} and \eqref{ps} that
\begin{align}\label{1expr}
\int_{1}^{\ld t^{-\inalp}}dw\,\int_{w}^{\infty}dz\,p_1^{(\alpha)}(z)
&=
 t^{-\inalp}\,\,s_{\infty}(t)-r_{\infty}(1)\\ \nonumber 
&=t^{-\inalp}\,\,s_{\pint{\inalp}}(t)- r_{\infty}(1)+t^{-\inalp}\,\,\tilde{s}_{\pint{\inalp}+1}(t),
\end{align}
where $\left[1/\alpha\right]$ denotes the integer part of $1/\alpha$.
 On the other hand, if $\alpha=1/N$ for some $N \in \mathbb{N}$, we obtain
\begin{align}\label{2expr}
\int_{1}^{\ld t^{-N}}dw\,\int_{w}^{\infty}dz\,p_1^{(1/N)}(z)
&= \,t^{-N}\,s_{N-1}(t)-r_{N-1}(1)+ 
   a_N(1/N)N\, \ln \pthesis{\frac{1}{t}}\\ \nonumber&+a_{N}(1/N)\ln(\abs{\Omega})+\,t^{-N}\,\tilde{s}_{N+1}(t)-\tilde{r}_{N+1}(1) \\\nonumber
   &=\,t^{-N}\,s_{N-1}(t)+
   a_N(1/N)N\, \ln \pthesis{\frac{1}{t}}+C_{N}^*(\Omega)+t^{-N}\tilde{s}_{N+1}(t).
   \nonumber
\end{align}
where
\begin{align}\label{const1}
C_{N}^*(\Omega)=a_{N}\pthesis{1/N}\ln(\ld)-
r_{N-1}(1)-\tilde{r}_{N+1}(1).
\end{align}

We rewrite $\ele$ given in \eqref{eldef} as follows.
\begin{align*}
\ele=\int_{0}^{1}dw\,\Prob\pthesis{w\leq X_1}+
\int_{1}^{\ld t^{-\inalp}}dw\,\int_{w}^{\infty}dz\,p_1^{(\alpha)}(z).
\end{align*}
Then, by using the last equality and the identities
\eqref{1expr} and \eqref{2expr},  we arrive at
\begin{align*}
\Hset{\Omega}{\Omega^c}=2\,s_{\pint{\inalp}}(t)+ C_{\alpha}\,t^{\inalp}+ 2\, \tilde{s}_{\pint{\inalp}+1}(t)
\end{align*}
for $1/\alpha \notin \mathbb{N}$. Here, $$C_{\alpha}=2\pthesis{\int_{0}^{1}dw\,\, \Prob\pthesis{w\leq X_1}-
r_{\infty}(1)}.$$
As for the case $\alpha=1/N$, some $N\in \mathbb{N}$, we have
\begin{align*}
\myH_{\Omega,\Omega^c}^{\pthesis{1/N}}(t)=2\,\,s_{N-1}(t)+2N\,a_N(1/N)\, t^{N}\ln\pthesis{\frac{1}{t}}
+C_{N}(\Omega)\,t^{N}+2\,\tilde{s}_{N+1}(t)
\end{align*}
with $$C_{N}(\Omega)=2\pthesis{\int_{0}^{1}dw\,\Prob\pthesis{w\leq X_1}+ C_{N}^*(\Omega)}$$ and $C_N^*(\Omega)$ as defined in \eqref{const1}. Hence, the proof of part $(iii)$ in
Theorem \ref{mth} is complete by taking
$R_{\alpha}(t)=2\,\tilde{s}_{\pint{\inalp}+1}(t)$ and observing that inequality \eqref{ineqfs} yields
$$\abs{R_{\alpha}(t)}\leq C\,t^{\pint{\inalp}+1},\,\,\,\,
0<t \leq \min\set{\ld^{\alpha}, e^{-1}},$$
for  some $C>0$.
%----------------------------------------------------------------------------------

\section{proof of theorem \ref{2thm}}\label{sec:T2}
We start by recalling equation \eqref{dbysub} which allows us to write the transition densities $\palp(x,y)$ by subordination of the Gaussian kernel.  Therefore, an application of  Fubini's Theorem yields
\begin{align}\label{alpsem}
\myH_{\Omega,\Omega^c}^{(\alpha)}(t)&=
\int_{\Omega}dx\,\int_{\Omega^c}dy\,\palp(x-y)\\ \nonumber
&=\int_{\Omega}dx\,\int_{\Omega^c}dy\, \E{p^{(2)}_{S_t}(x-y)} 
=\E{\myH_{\Omega,\Omega^c}^{(2)}(S_{t})}.
\end{align}

{\bf Proof of part $(i)$:} Assume $1<\alpha<2$. With the aid of the inequality \eqref{isop} which is valid for all positive time,  equality \eqref{alpsem}, the fact that $S_t\eid t^{2/\alpha}S_1$ and formula \eqref{gen.expt.S1}, it easily follows that
\begin{align}
\myH_{\Omega,\Omega^c}^{(\alpha)}(t)\leq 
\frac{ \mathcal{H}^{d-1}(\bd)}{\sqrt{\pi}} \E{S_{t}^{1/2}}=
\frac{1}{\pi}\,\,\Gamma\pthesis{1-\inalp}\,\,\mathcal{H}^{d-1}(\partial \Omega)\,\, t^{\inalp}
\end{align}
for all $t>0$ and with this we have proved  \eqref{iso2}.

On the other hand, by using once more  \eqref{alpsem}
and $S_t\eid t^{2/\alpha}S_1$, we obtain
\begin{align}\label{StableLes}
\frac{\myH_{\Omega,\Omega^c}^{(\alpha)}(t)}{t^{\inalp}\mathcal{H}^{d-1}(\partial \Omega)}&=
\int_{0}^{\infty}ds\,\,
\frac{ \myH_{\Omega,\Omega^c}^{(2)}(t^{2/\alpha}s)}{ t^{\inalp}\mathcal{H}^{d-1}(\partial \Omega)}\,\,\eta^{(\alpha/2)}_1(s)  \\ 
&=\int_{0}^{\infty}ds\,\,G(s,t),\nonumber
\end{align}
with
$$G(s,t)=s^{1/2}
\pthesis{\frac{\myH_{\Omega,\Omega^c}^{(2)}(s\,t^{2/\alpha})}{\pthesis{s\,t^{2/\alpha}}^{1/2}
\mathcal{H}^{d-1}(\partial \Omega)}}\eta^{(\alpha/2)}_1(s).
$$
We now observe two facts. First, by \eqref{hc2} we have
\begin{align*}
\lim\limits_{\tgo}G(s,t)=\frac{1}{\sqrt{\pi}}\,\,s^{1/2}\,\,\eta^{(\alpha/2)}_1(s).
\end{align*}
Secondly, by \eqref{isop}
\begin{align*}
0\leq G(s,t)\leq \frac{1}{\sqrt{\pi}}\,\,s^{1/2}\,\,\eta^{(\alpha/2)}_1(s)
\end{align*}
for all $t,s>0$ with $s^{1/2}\,\,\eta^{(\alpha/2)}_1(s)\in L^{1}((0,+\infty))$ because of \eqref{gen.expt.S1}. Hence,  the assertion \eqref{Isoplim} is an easy consequence of the Lebesgue Dominated Convergence Theorem and the  identity \eqref{StableLes}.\qed

We now continue with the proof of  $(ii)$ of Theorem \ref{2thm}.  This requires a much more delicate approach.  In order to make this presentation as clear as possible, we devote the next section to it. 

\section{cauchy processes in higher dimension}\label{sec:Cauchy}
In this section, we will adapt the  techniques used in
\cite{Mir} for the Gaussian heat kernel. This requires some 
additional considerations since as we have already 
pointed out the  Gaussian kernel has an exponential decay whereas the Cauchy heat kernel
$ p^{(1)}_t(x,y)$ defined in \eqref{Cauchyk} has a polynomial decay. 

From now on, we write every vector $x\in \Rd$ as 
$x=(\bar{x},x_{d})$
with $\bar{x}=(x_1,...,x_{d-1})\in \R^{d-1}.$
\begin{lem}\label{const}
For all integer $d\geq 2$, wet set
$$\gamma_d=\frac{\Gamma\pthesis{\frac{d+1}{2}}}{\pi^{\frac{d+1}{2}}}
\int_{\R^{d-1}}\frac{dw}{(1+\abs{w}^2)^{\frac{d+1}{2}}}.$$
\end{lem}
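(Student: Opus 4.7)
The lemma, while phrased as a definition, is evidently asserting a closed form for $\gamma_d$: namely, $\gamma_d = 1/\pi$ for every integer $d \geq 2$. This identification is what makes the constant useful, since $1/\pi$ is precisely the multiplier appearing in Theorem \ref{2thm}(ii) for the Cauchy process, and it matches the one-dimensional coefficient of $t\ln(1/t)$ already obtained in Theorem \ref{mth}(ii). So I will describe a plan for evaluating the integral that defines $\gamma_d$.

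The plan is to reduce the $(d-1)$-dimensional integral to a one-dimensional beta integral. First I would pass to spherical coordinates in $\R^{d-1}$, writing
\begin{align*}
\int_{\R^{d-1}}\frac{dw}{(1+\abs{w}^{2})^{(d+1)/2}}
  = \omega_{d-2}\int_{0}^{\infty}\frac{r^{d-2}}{(1+r^{2})^{(d+1)/2}}\,dr,
\end{align*}
where $\omega_{d-2}=2\pi^{(d-1)/2}/\Gamma((d-1)/2)$ is the surface area of the unit sphere in $\R^{d-1}$ (with the convention that $\omega_{0}=2$ when $d=2$, which matches a direct computation of the integral $\int_{\R}dw/(1+w^{2})^{3/2}$ and thus handles the base case).

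Next I would substitute $u=r^{2}$, so $dr = du/(2\sqrt{u})$, to recognize a beta function:
\begin{align*}
\int_{0}^{\infty}\frac{r^{d-2}}{(1+r^{2})^{(d+1)/2}}\,dr
   = \tfrac{1}{2}\int_{0}^{\infty}\frac{u^{(d-3)/2}}{(1+u)^{(d+1)/2}}\,du
   = \tfrac{1}{2}\,B\!\left(\tfrac{d-1}{2},\,1\right)
   = \frac{\Gamma((d-1)/2)}{2\,\Gamma((d+1)/2)},
\end{align*}
using the standard identity $\int_{0}^{\infty}u^{s-1}/(1+u)^{s+t}\,du = \Gamma(s)\Gamma(t)/\Gamma(s+t)$ with $s=(d-1)/2$ and $t=1$.

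Assembling these pieces, the $\Gamma((d-1)/2)$ factors cancel against $\omega_{d-2}$, leaving
\begin{align*}
\int_{\R^{d-1}}\frac{dw}{(1+\abs{w}^{2})^{(d+1)/2}}
 = \frac{\pi^{(d-1)/2}}{\Gamma((d+1)/2)},
\end{align*}
whence multiplication by $\Gamma((d+1)/2)/\pi^{(d+1)/2}$ collapses everything to $\gamma_{d}=1/\pi$. There is no real obstacle here; the only point to be careful about is dimension $d=2$, where the "spherical" step degenerates to summing over the two signs of $r$, but this case can equally well be treated as a direct antiderivative $\int_{\R}(1+w^{2})^{-3/2}\,dw = 2$, giving the same answer.
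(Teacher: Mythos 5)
Your proof is correct and follows essentially the same route as the paper: pass to polar/spherical coordinates in $\R^{d-1}$ and evaluate the resulting radial integral. The only cosmetic difference is that you recognize the radial integral as a beta function via $u=r^2$, whereas the paper uses the equivalent substitution $r=\tan\theta$ and the reduction $\Gamma\left(\tfrac{d+1}{2}\right)=\tfrac{d-1}{2}\Gamma\left(\tfrac{d-1}{2}\right)$; both yield $\gamma_d=1/\pi$ identically.
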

Then, $\gamma_d=\frac{1}{\pi}$.
\begin{proof}
By appealing to polar coordinates, we have
\begin{align*}
\gamma_d=\frac{\Gamma\pthesis{\frac{d+1}{2}}}{\pi^{\frac{d+1}{2}}}\cdot \frac{2\pi^{\frac{d-1}{2}}}{\Gamma\pthesis{\frac{d-1}{2}}}
\int_{0}^{\infty}\frac{r^{d-2}}{
\pthesis{1+r^2}^{\frac{d+1}{2}}}dr.
\end{align*}
Next, the properties of the gamma function and the change of variables $r=\tan(\theta)$ yield
\begin{align*}
\gamma_d=\frac{d-1}{\pi}\int_{0}^{\frac{\pi}{2}}\sin^{d-2}(\theta)\cos(\theta)d\theta=\frac{1}{\pi}.
\end{align*}
\end{proof}

\begin{lem}\label{ml}
Let $H=\set{(\bar{x},x_d)\in \Rd: x_d<0}$ and $\delta, \varepsilon>0$.  Set
$H^{\delta}=\R^{d-1}\times (0,\delta)$ and $H_{\varepsilon}=
\R^{d-1}\times (-\varepsilon,0)$. Assume
$\varphi\in C^{1}_c(\Rd)$ and consider the compact set
\begin{align}\label{kdef}
K=\set{\bar{x}\in \R^{d-1}: \exists \, x_d  \in \R \,\,such \,\, that\,
\, (\bar{x},x_d)\in supp(\varphi) }.
\end{align}
Then, there exists a function $R(t)$
such that
\begin{align}\label{Cauchyplane}
\int_{H^{\delta}}dx\,\varphi(x)\int_{H_{\varepsilon}}dy\,p^{(1)}_t(x,y)=
\frac{1}{\pi} \pthesis{\int_{K}d\bar{x}\,\varphi(\bar{x},0)}
t\ln\pthesis{\frac{1}{t}}+R(t),
\end{align}
with 
\begin{align}\label{RCauchy}
\abs{R(t)}\leq C_{\varepsilon,\delta,\varphi}\,\,t
\end{align}
for all $0<t<e^{-1}$.
\end{lem}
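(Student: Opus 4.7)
The strategy is to reduce the $d$-dimensional double integral to a purely one-dimensional calculation in the normal direction $x_d$, and then extract the $t\ln(1/t)$ leading term by an explicit antiderivative. First, I fix $x=(\bar{x},x_d)\in H^{\delta}$ and evaluate the inner integral over $H_{\varepsilon}$. Writing $|x-y|^2=|\bar{x}-\bar{y}|^2+(x_d-y_d)^2$ and performing the change of variables $\bar{y}=\bar{x}+\sqrt{t^2+(x_d-y_d)^2}\,w$ in the tangential integration, together with Lemma \ref{const}, yields
\begin{equation*}
\int_{H_{\varepsilon}} p_{t}^{(1)}(x,y)\,dy \,=\, \frac{1}{\pi}\int_{-\varepsilon}^{0}\frac{t\,dy_d}{t^2+(x_d-y_d)^2} \,=\, \frac{1}{\pi}\left[\arctan\!\left(\tfrac{x_d+\varepsilon}{t}\right)-\arctan\!\left(\tfrac{x_d}{t}\right)\right].
\end{equation*}

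Next, I would invoke identity \eqref{ei1} to rewrite the bracket as $\arctan(t/x_d)-\arctan(t/(x_d+\varepsilon))$. The second piece is bounded above by $t/\varepsilon$, so after multiplying by $\varphi$ and integrating it contributes only $O(t)$ and is absorbed into $R(t)$. The remaining main term is $\frac{1}{\pi}\int_{K}d\bar{x}\int_{0}^{\delta}dx_d\,\varphi(\bar{x},x_d)\,\arctan(t/x_d)$, and here I would replace $\varphi(\bar{x},x_d)$ by $\varphi(\bar{x},0)$. Since $\varphi\in C^{1}_{c}(\R^d)$, the pointwise error is at most $\|\nabla\varphi\|_{\infty}\,x_d$, and the elementary bound $x_d\arctan(t/x_d)\le\min(\pi x_d/2,\,t)\le \pi t/2$ shows that this replacement costs only $O(t)$ after integration.

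What remains is the purely one-dimensional quantity $\int_{0}^{\delta}\arctan(t/x_d)\,dx_d$, which I would evaluate by integration by parts (or equivalently the substitution $w=t/x_d$ combined with \eqref{ei2}). The explicit result is $\delta\arctan(t/\delta)+\tfrac{t}{2}\ln\!\bigl((t^2+\delta^2)/t^2\bigr)=t\ln(1/t)+O(t)$ for $0<t<e^{-1}$. Multiplying by $\tfrac{1}{\pi}\int_{K}\varphi(\bar{x},0)\,d\bar{x}$ produces exactly the stated main term with a remainder of order $t$, as required.

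The one delicate point is bookkeeping: each of the three error pieces (the $\arctan(t/(x_d+\varepsilon))$ tail, the Lipschitz replacement of $\varphi(\bar{x},x_d)$ by $\varphi(\bar{x},0)$, and the $O(t)$ corrections in the explicit evaluation of $\int_{0}^{\delta}\arctan(t/x_d)\,dx_d$) must be shown to be $O(t)$ rather than $O(t\ln(1/t))$. The $C^{1}$-regularity of $\varphi$ in the normal direction, together with the uniform bound $x_d\arctan(t/x_d)\le\pi t/2$, is exactly what prevents a spurious logarithm from sneaking into the remainder. The polynomial (as opposed to Gaussian) decay of the Cauchy kernel is invisible inside the slabs because Lemma \ref{const} trivializes the tangential integration; the contrast with the Brownian case surfaces only through the slower $\arctan$-based antiderivative, which produces a logarithmic leading term rather than a $t^{1/2}$ one.
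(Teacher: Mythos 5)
Your proposal is correct and follows essentially the same route as the paper: tangential change of variables plus Lemma \ref{const} to collapse the inner integral to $\frac{1}{\pi}\bigl[\arctan((x_d+\varepsilon)/t)-\arctan(x_d/t)\bigr]$, the $\arctan$ complement identity, the Lipschitz bound $|\varphi(\bar{x},x_d)-\varphi(\bar{x},0)|\le\|\nabla\varphi\|_\infty x_d$ paired with $x_d\arctan(t/x_d)\le t$, and the explicit antiderivative of $\arctan(t/x_d)$ to produce the $t\ln(1/t)$ term. The only cosmetic difference is that you discard the $\arctan(t/(x_d+\varepsilon))$ piece as $O(t)$ before making the $\varphi(\bar{x},0)$ replacement, whereas the paper keeps the full difference $F_t$ through both steps; the bookkeeping is equivalent and the error bounds match.
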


\begin{proof}
We first note that  the integral on the left hand side 
of \eqref{Cauchyplane} equals 
\begin{align*}
\int_{H^{\delta}}dx\,\,\varphi(x)\int_{-\varepsilon}^{0}dy_d
\int_{\R^{d-1}}\frac{\Gamma\pthesis{\frac{d+1}{2}}}{\pi^{\frac{d+1}{2}}}\cdot\frac{\,t\,d\bar{y}}{\pthesis{t^{2}+\abs{x_d-y_d}^{2}+
\abs{\bar{x}-\bar{y}}^2}^{(d+1)/2}}.
\end{align*}
By considering the change of variable
$$\bar{y}=\bar{x}-\sqrt{t^{2}+
\abs{x_d-y_d}^2}\,\,w$$ and Lemma \ref{const},
we reduce the last integral to 
\begin{align*}
\frac{t}{\pi}\int_{\R^{d-1}}d\bar{x}\int_{0}^{\delta}dx_d \,\,\varphi(\bar{x},x_d)
\int_{-\varepsilon}^{0}\frac{\,dy_d}{t^{2}+
\abs{x_d-y_d}^2}.
\end{align*}
Thus,  by making the new change of variables $x_d-y_d=tz$ in the last integral expression, we arrive at
\begin{align}\label{Cauchyplane2}
\int_{H^{\delta}}dx\,\varphi(x)\int_{H_{\varepsilon}}dy\,p^{(1)}_t(x,y)
=\frac{1}{\pi}\int_{\R^{d-1}}d\bar{x}
\int_{0}^{\delta}dx_d\,\,\varphi(\bar{x},x_d)\,\,F_t(x_d,\varepsilon)
\end{align}
with
\begin{align}\label{Fdef}
F_t(x_d,\varepsilon)&=\arctan\pthesis{\frac{x_d+\varepsilon}{t}}-
\arctan\pthesis{\frac{x_d}{t}}\nonumber\\
&=\arctan\pthesis{\frac{t}{x_d}}-\arctan\pthesis{\frac{t}{x_d+\varepsilon}}.
\end{align}

Let us set at this point
\begin{align}\label{hdef}
h(\bar{x},x_d)=\varphi(\bar{x},x_d)-\varphi(\bar{x},0).
\end{align}
Notice that according to \eqref{kdef}, we have
\begin{align}\label{fact1} 
\varphi(\bar{x},x_d)=h(\bar{x},x_d)=0
\end{align}
for all $(\bar{x},x_d)\in K^c\times \R$. 
Since $\varphi(x)$ is compactly supported with continuous partial derivatives it follows from the Taylor expansion
that
\begin{align}\label{hlip}
\abs{h(\bar{x},x_d)}=\abs{\int_{0}^{1}
\nabla \varphi((\bar{x},x_d)-s(\bar{x},0))\cdot(0,x_d)ds}\leq
||\nabla \varphi||_{\infty}|x_d|.
\end{align}

We next consider  the continuous function $\Pi:\R^d\rightarrow \R^{d-1}$ defined by $\Pi(\bar{x},x_d)=\bar{x}.$ Then 
$$K=\set{\Pi(\bar{x},x_d):(\bar{x},x_d)\in supp(\varphi)}.$$ 
Thus, because of the continuity of $\Pi$, $K$  is a compact  set in $\R^{d-1}$  whose finite Lebesgue measure will be denoted in what follows by $|K|$. 

Now, by appealing to \eqref{Cauchyplane2},  \eqref{hdef} and
\eqref{fact1},  we find that
\begin{align}\label{Cauchyp3}
\int_{H^{\delta}}dx\,\varphi(x)\int_{H_{\varepsilon}}dy\,p^{(1)}_t(x,y)
=\frac{1}{\pi}\int_{K}d\bar{x}
\int_{0}^{\delta}\,dx_d\,\varphi(\bar{x},0)\,F_t(x_d,\varepsilon)+
R_2(t)
\end{align}
with
\begin{align}\label{R2}
R_2(t)=\frac{1}{\pi}\int_{K}d\bar{x}
\int_{0}^{\delta}dx_d\,h(\bar{x},x_d)\,F_t(x_d,\varepsilon).
\end{align}
As for the first integral term on the right hand side of the equation \eqref{Cauchyp3}, we have by using  the elementary  identities \eqref{ei1} and \eqref{ei2} that it is equal to
\begin{align*}
\frac{1}{\pi} \pthesis{\int_{K}d\bar{x}\,\varphi(\bar{x},0)}
\pthesis{t\ln\pthesis{\frac{1}{t}}+R_1(t)},
\end{align*}
with
\begin{align*}
R_1(t)&=
\varepsilon \arctan\pthesis{\frac{t}{\varepsilon}}+
\delta\arctan\pthesis{\frac{t}{\delta}}\\
&-(\delta+\varepsilon)\arctan\pthesis{\frac{t}{\delta+\varepsilon}}+
\frac{t}{2}\ln\pthesis{\frac{(t^2+\varepsilon^2)(t^{2}+
\delta^2)}{t^{2}+(\delta+\varepsilon)^2}}.
\end{align*}
We remark that due to the inequality $\arctan(x)\leq x$ for
$x>0$ and the fact that the function $\ln\pthesis{\frac{(t^2+\varepsilon^2)(t^{2}+
\delta^2)}{t^{2}+(\delta+\varepsilon)^2}}$ is continuous  for  $0\leq t\leq e^{-1}$ because $\delta$ and $\varepsilon$  are positive, we obtain that $\abs{R_1(t)}\leq C_{\delta,\varepsilon}\,t$ for  some $C_{\delta,\varepsilon}>0$.

As for $R_2(t)$, we first observe that by \eqref{Fdef}, 
\begin{equation}\label{Fub}
0\leq F_{t}(x_d,\varepsilon)\leq \arctan\pthesis{\frac{t}{x_d}}\leq \frac{t}{x_d}.
\end{equation}
Therefore, by combining \eqref{hlip}, \eqref{R2}  and \eqref{Fub}, we have 
\begin{align*}
\abs{R_2(t)}\leq \frac{1}{\pi} \int_{K}d\bar{x}\int_{0}^{\delta}
dx_d\abs{h(\bar{x},x_d)}F_{t}(x_d,\varepsilon)\leq \frac{1}{\pi} \,\delta\,||\nabla \varphi||_{\infty} |K|\,t.
\end{align*}

Now by setting $R(t)=\frac{1}{\pi}\pthesis{\int_{K}d\bar{x}\,\varphi(\bar{x},0)}R_1(t)+R_{2}(t)$ and putting together all the estimates given above we conclude \eqref{RCauchy} and this finishes the proof of Lemma \ref{ml}.
\end{proof}

Before proceeding, we comment further on the last result. In probabilistic terms, we have 
$$\int_{H^{\delta}}dx\,\,\varphi(x)\int_{H_{\varepsilon}}dy\,\,p^{(1)}_t(x,y)=\int_{H^{\delta}}dx\,\,\varphi(x)\,\,\Prob^{x}\pthesis{
X_t\in H_{\varepsilon}}.$$

The goal of the last integral is to understand how the paths of the 
Cauchy process ``perceive"  the boundary of the half plane $H$. The above lemma says
that when  $\varphi(x)\in C^{1}_c(\R^{d})$,
the process  ``feels" the influence of the boundary $\partial H=\R^{d-1}\times\set{0}$ by means of the
term
$$\int_{K\subset \R^{d-1}}d\bar{x}\,\varphi(\bar{x},0).$$

For a bounded domain with smooth boundary $\Omega$, the paths conditioned to start in $\Omega$  and exit at time $t$ should ``view" the boundary as a half-plane. Therefore, it is expected that we can replace
$\int_{K}d\bar{x}\, \varphi(\bar{x},0)$ with
$\int_{\bd}\varphi(\sigma)\,\,d\mathcal{H}^{d-1}(\sigma).$ To this aim, we recall  some definitions and geometric properties on uniformly $C^{1,1}$-regular domains.  We refer the reader to \cite{Mir}, \cite{Wloka} and references therein for details and further  considerations on the topic.

We set
$\rho_{\Omega}(x)=\inf\set{\abs{x-\sigma}:\sigma\in \bd}$ and for  $\delta,\varepsilon>0$, we define
\begin{align}\label{inout}
\Omega^{\delta}&=\set{x\in \Omega^c:\md<\delta},\\
\Omega_{\varepsilon}&=\set{x\in \Omega\,\,\,:\md<\varepsilon}.
\nonumber
\end{align}  

\begin{prop}\label{Cdomprop}
Let $\Omega \subset \Rd$ be a uniformly $C^{1,1}$-regular  bounded domain. Then, 
\begin{enumerate}
\item[$(a)$] there exists $\varepsilon, \delta > 0$ such that the maps
\begin{align*}
J:\bd \,\times \, [0, \delta] \rightarrow \Omega^{\delta},
\,\,\,J(\sigma, r) =\sigma + r\,\nu(\sigma),
\end{align*}
\begin{align}\label{bdrep}
\tilde{J}:\bd \,\times \, [0, \varepsilon] \rightarrow \Omega_{\varepsilon},\,\,\, \tilde{J}(\sigma, r) = \sigma - r\,  \nu(\sigma),
\end{align}
where $\nu(\sigma)$ is the outward unit normal to $\bd$ at $\sigma$, are $C^{1,1}$-diffeomorphisms.
\\

\item[$(b)$] Given $\eta > 0$, there exists  a  finite covering 
$V =\set{V_i}$ of $\bd$ and $C^{1,1}$--diffeomorphisms
$\psi_i : K_i \rightarrow V_i,$ with $K_i$ open subset of $\R^{d-1}$ such that if we set 
$$\Psi_i (\xi,\rho )= \psi_i (\xi) + \rho\,\nu(\psi_i (\xi)),\,\,\,\,\,\,\,
\xi \in K_i, \,\,\rho \in (-\varepsilon, \delta),$$ then the family
of open sets  $U = \set{U_i}$ with
$U_i = \Psi_i\pthesis{K_i \times (-\varepsilon, \delta)}$
covers $\Omega_{\varepsilon}\cup \Omega^{\delta}$ with Jacobians satisfying
\begin{align}\label{detcomp}
\abs{D\Psi_i(\xi,\rho)}&=1+\mathcal{O}(\eta),
\,\,
\xi \in K_i, \,\,\rho \in (-\varepsilon, \delta),
\\
|D\Psi_i^{-1} (x)| &= 1 + \mathcal{O}(\eta),\,\,x\in U_i\nonumber \\
|D\psi_i^{-1} (x)| &= 1 + \mathcal{O}(\eta), \,\,x \in V_i.\nonumber
\end{align}
Also
\begin{align}\label{distance}
\abs{\Psi_i(z,r)-\Psi_i(\xi,\rho)}^2=\abs{(z,r)-(\xi,\rho)}^2
\pthesis{1+\mathcal{O}(\eta)},
\end{align}
for all $\xi,z \in K_i$ and $\rho, r \in (-\varepsilon, \delta)$. Here, 
we use the notation $\mathcal{O}(\eta)$ to mean a function which is upper bounded in absolute value by $C\eta$, where the constant $C$  depends only on $\Omega, \varepsilon, \delta$. 
\end{enumerate}
\end{prop}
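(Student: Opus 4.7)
The plan is to establish both parts as consequences of the uniform $C^{1,1}$ assumption, specifically via the $R$-smoothness property recalled just after the definition of uniformly $C^{1,1}$ domains, together with a compactness argument to obtain a finite covering with controlled local charts. This is essentially a tubular neighborhood construction adapted to the $C^{1,1}$ setting, as treated in \cite{Mir} and \cite{Wloka}; the reason it works cleanly here is that the $R$-smooth condition gives a uniform ``thickness'' for the normal collar on both sides of $\bd$.

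For part $(a)$, I would first choose $\delta,\varepsilon \in (0,R)$, where $R$ is the radius in the $R$-smooth property. For any $\sigma\in\bd$, the interior ball $B_1\subset\Omega$ and exterior ball $B_2\subset\R^d\setminus\overline{\Omega}$ both have radius $R$ and touch at $\sigma$, so the inward and outward normals $\pm\nu(\sigma)$ are well defined. Injectivity of $J$ (resp.\ $\tilde J$) follows because if $J(\sigma_1,r_1)=J(\sigma_2,r_2)$ with $\sigma_1\ne\sigma_2$, the corresponding exterior balls of radius $R$ would overlap, contradicting $B_2(\sigma_i)\subset\R^d\setminus\overline{\Omega}$. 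Since $\bd$ is locally the graph of a $C^{1,1}$ function $\Lambda$, the outward normal $\nu$ is Lipschitz on $\bd$, hence $J$ and $\tilde J$ are $C^{0,1}$. A computation gives $DJ(\sigma,r)=I+r\,D\nu(\sigma)$ in tangential coordinates, which is invertible for $r<\delta$ small, so the inverse function theorem for Lipschitz maps yields the $C^{1,1}$-diffeomorphism property; surjectivity onto $\Omega^{\delta}$ (resp.\ $\Omega_{\varepsilon}$) is immediate from the nearest-point projection onto $\bd$, which is well defined and unique in the $R$-smooth collar.

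For part $(b)$, fix $\eta>0$. At any $\sigma_0\in\bd$, choose coordinates so that the tangent hyperplane at $\sigma_0$ is $\R^{d-1}\times\{0\}$ and $\sigma_0=0$; locally $\bd$ is the graph of a $C^{1,1}$ function $\Lambda$ with $\Lambda(0)=0$ and $\nabla\Lambda(0)=0$. By continuity of $\nabla\Lambda$ we can shrink the domain $K\subset\R^{d-1}$ of the graph so that $\|\nabla\Lambda\|_{\infty,K}\leq\eta$. Compactness of $\bd$ then yields a finite covering $\{V_i\}$ by such chart pieces, with parametrizations $\psi_i(\xi)=(\xi,\Lambda_i(\xi))$. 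Extending via the normal gives $\Psi_i(\xi,\rho)=\psi_i(\xi)+\rho\,\nu(\psi_i(\xi))$, and this is a $C^{1,1}$-diffeomorphism onto $U_i$ by the same argument as in $(a)$ applied locally, provided $\varepsilon,\delta$ are less than the reciprocal of the Lipschitz constant of $\nu\circ\psi_i$.

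The Jacobian estimates \eqref{detcomp} and the distance comparison \eqref{distance} are then obtained by direct computation. For $\psi_i$, one has $|D\psi_i(\xi)|=\sqrt{1+|\nabla\Lambda_i(\xi)|^2}=1+\mathcal{O}(\eta)$. For $\Psi_i$, the outward normal reads $\nu(\psi_i(\xi))=(-\nabla\Lambda_i(\xi),1)/\sqrt{1+|\nabla\Lambda_i(\xi)|^2}$, which differs from the vertical unit vector by a quantity of order $\eta$; a block matrix computation for $D\Psi_i$ then gives $|D\Psi_i(\xi,\rho)|=1+\mathcal{O}(\eta)$, and the formula for $|D\Psi_i^{-1}|$ follows by inversion. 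The distance comparison follows because $\Psi_i$ differs from the rigid map $(\xi,\rho)\mapsto(\xi,\rho)+(\text{translation depending on }\sigma_0)$ by a perturbation of size $\eta$ in the derivative, so after squaring, $|\Psi_i(z,r)-\Psi_i(\xi,\rho)|^2=|(z,r)-(\xi,\rho)|^2(1+\mathcal{O}(\eta))$ by the mean value theorem. The main obstacle here is purely bookkeeping: carefully organizing the local charts so the $\mathcal{O}(\eta)$ constants are controlled uniformly by $\Omega,\varepsilon,\delta$, which is why the finite covering from compactness is essential.
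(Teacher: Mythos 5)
The paper does not actually prove this proposition: it is stated as a recalled fact, with the reader sent to \cite{Mir} and \cite{Wloka} for details. So there is no ``paper's proof'' against which to compare; you are filling an expository gap. Your reconstruction follows the standard tubular-neighbourhood construction for $C^{1,1}$ domains and is in the spirit of the cited references. Part $(b)$ is essentially right: choosing charts over tangent planes so that $\|\nabla\Lambda_i\|_\infty\leq\eta$, using compactness for a finite cover, and reading off the Jacobian and distance estimates from $D\Psi_i=I+\mathcal{O}(\eta)$ on a convex parameter box (so the mean-value integral applies) is exactly the computation one needs.

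One step in part $(a)$, however, is stated in a way that does not actually produce a contradiction: you write that if $J(\sigma_1,r_1)=J(\sigma_2,r_2)$ with $\sigma_1\neq\sigma_2$ then ``the corresponding exterior balls of radius $R$ would overlap,'' but exterior balls attached to nearby boundary points routinely overlap, so that by itself contradicts nothing. The correct use of $R$-smoothness is different: with $y=\sigma_1+r_1\nu(\sigma_1)=\sigma_2+r_2\nu(\sigma_2)$, $r_1\leq r_2<R$, and $c_2=\sigma_2+R\nu(\sigma_2)$ the centre of the exterior tangent ball at $\sigma_2$, the triangle inequality gives $|\sigma_1-c_2|\leq r_1+(R-r_2)\leq R$, while $\sigma_1\in\bd\subset\overline\Omega$ forces $|\sigma_1-c_2|\geq R$; equality then forces $r_1=r_2$ and collinearity, hence $\sigma_1=\sigma_2$. (Equivalently: $R$-smoothness is the statement that $\bd$ has reach at least $R$, so the nearest-point projection onto $\bd$ is single-valued within distance $R$.) The same remark applies to $\tilde J$ using the interior tangent ball. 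With that argument substituted, and keeping in mind that for $C^{1,1}$ boundaries $D\nu$ is only $L^\infty$ so ``$C^{1,1}$-diffeomorphism'' here means bi-Lipschitz with Lipschitz Jacobian (one invokes Clarke's inverse function theorem rather than the classical $C^1$ one), your outline is sound.
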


The main result of this section is the following.
\begin{thm}\label{2mth}
Let $\Omega\subset\Rd$ be a uniformly $C^{1,1}$-regular  bounded  domain.
Consider $\Omega_{\varepsilon}$ and $\Omega^{\delta}$ the
inner and outer tubular neighbourhoods of $\bd$ defined in
\eqref{inout}. Then, for every $\varphi\in C_c^{1}(\Rd)$ we have
\begin{align}
\lim\limits_{\tgo}\frac{1}
{t\ln\pthesis{\frac{1}{t}}}\int_{\Omega^{\delta}}\,dx\,\varphi(x)
\int_{\Omega_{\varepsilon}}\,dy&\,p^{(1)}_t(x,y)
=
\frac{1}{\pi}\int_{\bd}\varphi(\sigma)\,d\mathcal{H}^{d-1}(\sigma).
\end{align}
\end{thm}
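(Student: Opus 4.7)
The strategy is to localize near $\bd$ using a partition of unity and then flatten the boundary by means of the $C^{1,1}$-diffeomorphisms $\Psi_i$ of Proposition \ref{Cdomprop}(b), reducing the computation on each chart to the half-plane estimate of Lemma \ref{ml}. Fix $\eta>0$ and take the cover $\{U_i\}$, the charts $\Psi_i:K_i\times(-\varepsilon,\delta)\to U_i$, and the boundary parametrizations $\psi_i:K_i\to V_i\cap\bd$ from Proposition \ref{Cdomprop}(b). Let $\{\chi_i\}$ be a finite $C^1$ partition of unity subordinate to $\{U_i\}$ with $\sum_i\chi_i\equiv 1$ on the tubular neighbourhood $\Omega_\varepsilon\cup\Omega^\delta$, and decompose $\varphi=\sum_i\chi_i\varphi$; this splits the double integral into local contributions $I_i(t)$, one per chart.

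\textbf{Localization and change of variables.} For each $i$, split the $y$-integration at $U_i$. Since $\mathrm{supp}(\chi_i)$ lies at positive distance $d_i>0$ from $U_i^c$, the bound $p^{(1)}_t(x,y)\leq Ct\,|x-y|^{-(d+1)}$ shows that the contribution from $y\in\Omega_\varepsilon\setminus U_i$ is $\mathcal{O}(t)$, hence negligible compared with $t\ln(1/t)$. On the remaining piece I change variables $x=\Psi_i(\xi,\rho)$ with $\rho\in(0,\delta)$ and $y=\Psi_i(z,r)$ with $r\in(-\varepsilon,0)$. The Jacobian estimates \eqref{detcomp} and the distance estimate \eqref{distance} together give, uniformly in $t>0$,
\begin{equation*}
p^{(1)}_t\bigl(\Psi_i(\xi,\rho),\Psi_i(z,r)\bigr) = \bigl(1+\mathcal{O}(\eta)\bigr)\,p^{(1)}_t\bigl((\xi,\rho),(z,r)\bigr),
\end{equation*}
since with $A=t^2$ and $B=|(\xi,\rho)-(z,r)|^2$ one has $A+B(1+\mathcal{O}(\eta))=(A+B)(1+\mathcal{O}(\eta))$ because $B/(A+B)\leq1$, and raising to the power $(d+1)/2$ preserves the same multiplicative error.

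\textbf{Applying Lemma \ref{ml} and extracting the surface integral.} Define
\begin{equation*}
\varphi_i(\xi,\rho) := \chi_i\bigl(\Psi_i(\xi,\rho)\bigr)\,\varphi\bigl(\Psi_i(\xi,\rho)\bigr)\,|D\Psi_i(\xi,\rho)|,
\end{equation*}
which lies in $C^1_c(K_i\times(-\varepsilon,\delta))$ and extends by zero to an element of $C^1_c(\R^d)$. Writing $|D\Psi_i(z,r)|=1+\mathcal{O}(\eta)$ and extending the $z$-integration from $K_i$ to $\R^{d-1}$ (the difference contributes another $\mathcal{O}(t)$ by the same polynomial-decay argument), Lemma \ref{ml} applied to $\varphi_i$ yields
\begin{equation*}
I_i(t) = \frac{1+\mathcal{O}(\eta)}{\pi}\pthesis{\int_{K_i}\varphi_i(\xi,0)\,d\xi}\,t\ln\!\pthesis{\frac{1}{t}} + \mathcal{O}(t).
\end{equation*}
Because $\Psi_i(\xi,0)=\psi_i(\xi)$ parametrizes $V_i\cap\bd$, the area formula together with \eqref{detcomp} gives $\int_{K_i}\varphi_i(\xi,0)\,d\xi=(1+\mathcal{O}(\eta))\int_{V_i\cap\bd}\chi_i(\sigma)\varphi(\sigma)\,d\mathcal{H}^{d-1}(\sigma)$. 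Summing over $i$ and using $\sum_i\chi_i\equiv 1$ on $\bd$ produces
\begin{equation*}
\int_{\Omega^\delta}\varphi(x)\,dx\int_{\Omega_\varepsilon}p^{(1)}_t(x,y)\,dy = \frac{1+\mathcal{O}(\eta)}{\pi}\pthesis{\int_{\bd}\varphi\,d\mathcal{H}^{d-1}}\,t\ln\!\pthesis{\frac{1}{t}} + \mathcal{O}(t).
\end{equation*}
Dividing by $t\ln(1/t)$, passing to $\tgo$ with $\eta$ fixed, and then sending $\eta\downarrow 0$ (which may require refining the cover but leaves the leading coefficient $\frac{1}{\pi}\int_{\bd}\varphi\,d\mathcal{H}^{d-1}$ unchanged) gives the claimed limit.

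\textbf{Main obstacle.} The principal technical point, as flagged in the opening of this section, is that $p^{(1)}_t$ decays only polynomially, so the exponential-suppression arguments available for the Gaussian in \cite{Mir} are not at our disposal. One must instead track, uniformly in $t$, the multiplicative distortion of $p^{(1)}_t$ under the $C^{1,1}$-chart; this is tractable precisely because the rational form $(t^2+|x-y|^2)^{-(d+1)/2}$ is stable under the small relative perturbation $|x-y|^2\mapsto(1+\mathcal{O}(\eta))|x-y|^2$. A secondary nuisance is the control of the ``cross'' terms with $x\in U_i$ but $y\notin U_i$, which is again handled by the same polynomial bound and contributes only $\mathcal{O}(t)$.
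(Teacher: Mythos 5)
Your argument follows the paper's own proof essentially line by line: localize with a partition of unity subordinate to the boundary charts of Proposition \ref{Cdomprop}, show the ``cross'' terms (the $\tilde I_i$ in the paper) contribute only $\mathcal{O}(t)$ by the $|x-y|^{-(d+1)}$ decay, flatten via $\Psi_i$, observe that the rational form of $p^{(1)}_t$ distorts only by a multiplicative $1+\mathcal{O}(\eta)$ factor under a $(1+\mathcal{O}(\eta))$-perturbation of $|x-y|^2$ (your $A,B$ computation is exactly the paper's use of $g_t=|x-y|^2/(t^2+|x-y|^2)\in[0,1]$), invoke Lemma \ref{ml}, convert the $K_i$-integral to a surface integral using $\Psi_i(\xi,0)=\psi_i(\xi)$ and the Jacobian estimates, and finally let $\eta\downarrow 0$. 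The one place you go slightly beyond the paper's exposition is in explicitly noting that extending the $z$-integration from $K_i\times(-\varepsilon,0)$ to $\R^{d-1}\times(-\varepsilon,0)$ (so that Lemma \ref{ml} applies verbatim) costs only $\mathcal{O}(t)$ because $\mathrm{supp}(\chi_i\circ\Psi_i)$ sits at positive distance from $(\R^{d-1}\setminus K_i)\times\R$; the paper leaves this implicit. One small caveat: folding the factor $|D\Psi_i(\xi,\rho)|$ into your $\varphi_i$ makes it at best Lipschitz rather than $C^1$ (since $\Psi_i$ is only $C^{1,1}$), whereas Lemma \ref{ml} is stated for $C^1_c$; this is harmless because the only use of $C^1$ in the lemma's proof is the bound $|h(\bar x,x_d)|\leq\|\nabla\varphi\|_\infty|x_d|$, which holds for Lipschitz functions, but the paper sidesteps the issue entirely by keeping the Jacobians attached to $p_t$ and pulling out the $(1+\mathcal{O}(\eta))$ factor before invoking the lemma. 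Either way the argument is sound.
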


\begin{proof}
Let $\eta>0$ and consider the finite family of open sets $U=\set{U_i}$ provided by part $(b)$ in the last proposition.  Now, let $\set{\chi_i}$ be a smooth partition of the unity subordinated to the covering $U$ (see \cite[Theorem 1.2]{Wloka}). We assume without loss of generality that 
$supp( \chi_i)\subset U_i $. Therefore, by using the fact that $\sum\limits_{i} \chi_i(x)=1$ for every $x \in \cup U_i $, we have 
\begin{align*}
\int_{\Omega^{\delta}}\,dx\,\varphi(x)
\int_{\Omega_{\varepsilon}}\,dy\,p^{(1)}_t(x,y)&=
\int_{\Omega^{\delta}}\,dx\,\varphi(x)\pthesis{\sum\limits_{i} \chi_i(x)}\int_{\Omega_{\varepsilon}}\,dy\,p^{(1)}_t(x,y)\\ \nonumber&=\sum\limits_{i}\int_{\Omega^{\delta}}\,dx\,\varphi(x) \chi_i(x)\int_{\Omega_{\varepsilon}}\,dy\,p^{(1)}_t(x,y) \\ \nonumber
&=\sum\limits_{i}\pthesis{I_{i}+\tilde{I}_{i}}
\end{align*}
with 
\begin{align*}
I_{i}=\int_{\Omega^{\delta}\cap supp(\chi_i)}\,dx\,\varphi(x)\chi_i(x)
\int_{\Omega_{\varepsilon}\cap U_i}\,dy&\,p^{(1)}_t(x,y),\\
\tilde{I}_{i}=\int_{\Omega^{\delta}\cap supp(\chi_i)}\,dx\,\varphi(x)\chi_i(x)
\int_{\Omega_{\varepsilon}\setminus U_i}\,dy&\,p^{(1)}_t(x,y).
\end{align*}
 
 Observe that $supp(\chi_i)\subset U_i$ is compact and also disjoint from the compact set $\overline{\Omega_{\varepsilon}\setminus U_i}$, then
\begin{align*}%\label{pd}
 \inf\set{|x-y|:x\in supp(\chi_i),y \in \overline{\Omega_{\varepsilon}\setminus U_i}}=\mu_i>0.
\end{align*}
Thus, by appealing to the explicit form of the Cauchy heat kernel \eqref{Cauchyk} and the fact $0\leq \chi_i\leq1$ for every $i$,  we conclude 
\begin{align*}
\lim\limits_{\tgo}\abs{\frac{1}{t\ln\pthesis{\frac{1}{t}}}\sum\limits_{i}\tilde{I}_i}\leq
\lim\limits_{\tgo}\frac{k_d}{\ln\pthesis{\frac{1}{t}}}\pthesis{\sum\limits_{i}\frac{|\Omega_{\varepsilon}\setminus U_
i|}{\mu_i^{d+1}\,}
\int_{\Omega^{\delta}\cap supp(\chi_i)} dx\,\,\abs{\varphi(x)}}=0.
\end{align*}

Now, we proceed to deal with the term $I_i$. We start by expressing  every $x\in \Omega^{\delta}\cap supp(\chi_i)$  and 
$y \in \Omega_{\varepsilon} \cap U_i$ under the new variables introduced in Proposition \ref{Cdomprop}. Namely, 
\begin{align*}
y &= \Psi_i(z, r),\,\,\,\,\, z \in K_i,\, r \in [-\varepsilon, 0], \\
x &= \Psi_i(\xi,\rho ),\,\,\,\,\, \xi\in K_i,\, \rho 
\in [0, \delta].
\end{align*}
Then, using these equalities, we obtain
\begin{align*}
I_i =\int_{K_i\times(0,\delta)}d\xi\,d\rho\,\chi_i\pthesis{\Psi_i(\xi,\rho)} \varphi\pthesis{\Psi_i(\xi,\rho)}
\int_{K_i\times(-\varepsilon,0)}dz\,dr\,
p_t((z,r),(\xi,\rho)),
\end{align*}
where we have set
\begin{align*}
p_t((z,r),(\xi,\rho))=
p_{t}^{(1)}(\Psi_i(z,r),\Psi_i(\xi,\rho))\abs{D\Psi_i(z,r)}\abs{D\Psi_i(\xi,\rho)}.
\end{align*}
Define $g_t(x,y)=\frac{|x-y|^2}{t^2+|x-y|^2}$ with $x,y\in \R^{d}$ and $t>0$. Hence, by using the estimates given in \eqref{detcomp} and \eqref{distance}, we find that
\begin{align*}
p_t((z,r),(\xi,\rho))&=p_t^{(1)}((z,r),(\xi,\rho))
\left\lbrack\frac{1+\mathcal{O}(\eta)}{\pthesis{1+g_t((\xi,\rho), (z,r))\mathcal{O}(\eta)}^{(d+1)/2}}\right\rbrack.
\end{align*}
We now observe by using that $0\leq  g_t\leq 1$ and the above expression, we can chose  $\eta$  very small but arbitrary such that
\begin{align}
p_t((z,r),(\xi,\rho))=p_t^{(1)}((z,r),(\xi,\rho))
\pthesis{1+\mathcal{O}(\eta)}.
\end{align}
Therefore, we conclude by Proposition \ref{Cdomprop} and \eqref{detcomp} that
\begin{align}\label{lim1}
\lim\limits_{\tgo}\frac{1}{t\ln\pthesis{\frac{1}{t}}}\sum\limits_{i}I_i&=
\pthesis{1+\mathcal{O}(\eta)}\frac{1}{\pi}\sum \limits_{i}
\int_{K_i}\chi_{i}\pthesis{\Psi_i(\xi,0)}\varphi\pthesis{\Psi_i(\xi,0)}
d\xi\\  \nonumber
&=
\pthesis{1+\mathcal{O}(\eta)}\frac{1}{\pi}\sum \limits_{i}
\int_{K_i}\chi_{i}\pthesis{\psi_i(\xi)}\varphi\pthesis{\psi_i(\xi)}
d\xi\\ \nonumber
&=\pthesis{1+\mathcal{O}(\eta)}\frac{1}{\pi}\sum \limits_{i}
\int_{V_i}\chi_{i}\pthesis{\sigma}\varphi\pthesis{\sigma}
\abs{D\psi_i^{-1}(\sigma)}d\mathcal{H}^{d-1}(\sigma).
\end{align}
Notice that $V_i=U_i\cap \partial \Omega$. It follows from the facts $|D\psi_i^{-1} (\sigma)| = 1 + \mathcal{O}(\eta), \,\,\sigma\in V_i$   and the assumption $supp( \chi_i)\subset U_i $ that 
\begin{align*}
\int_{V_i}\chi_{i}\pthesis{\sigma}\varphi\pthesis{\sigma}
\abs{D\psi_i^{-1}(\sigma)}d\mathcal{H}^{d-1}(\sigma)&=
\pthesis{ 1 + \mathcal{O}(\eta)}\int_{\partial \Omega}\chi_{i}\pthesis{\sigma}
\mathbbm{1}_{ supp(\chi_i)}(\sigma)
\varphi(\sigma)
d\mathcal{H}^{d-1}(\sigma)\\ \nonumber &=
\pthesis{ 1 + \mathcal{O}(\eta)}\int_{\partial \Omega}\chi_{i}\pthesis{\sigma}
\varphi(\sigma)
d\mathcal{H}^{d-1}(\sigma).
\end{align*}
Thus, the last expression and equality \eqref{lim1} allow us to conclude
\begin{align*}
\lim\limits_{\tgo}\frac{1}{t\ln\pthesis{\frac{1}{t}}}\sum\limits_{i}I_i&=\pthesis{1+\mathcal{O}(\eta)}\frac{1}{\pi}
\int_{\bd}\pthesis{\sum \limits_{i}\chi_{i}(\sigma)}\varphi\pthesis{\sigma}
d\mathcal{H}^{d-1}(\sigma) \\
&=\pthesis{1+\mathcal{O}(\eta)}\frac{1}{\pi}
\int_{\bd}\varphi\pthesis{\sigma}
d\mathcal{H}^{d-1}(\sigma),
\end{align*}
where we have used that $\sum\limits_{i} \chi_i(\sigma)=1$ for every $\sigma \in \cup U_i $ and $\partial \Omega \subset  \cup U_i$.
The proof is complete by letting $\eta$ go to zero.
\end{proof}

\begin{rmk} 
Let $\Omega\subset\Rd$ be a uniformly   $C^{1,1}$-regular bounded domain  and $\varepsilon, \delta$ as given in Proposition \ref{Cdomprop}. It is clear because of  the boundedness of
$\Omega$ that $\overline{\Omega^{\delta}\cup
\Omega_{\varepsilon}}$ is contained in some open ball.  Thus, by
Corollary 1.2 in \cite[p.~8]{Wloka}, there exists  an infinitely differentiable and compactly supported function $\varphi$ such that
$$\overline{\Omega^{\delta}\cup \Omega_{\varepsilon}}\subset \set{ x \in supp(\varphi): 
\varphi(x)=1}.$$
Therefore, as an application of Theorem \ref{2mth}, we conclude
\begin{align*}
\lim\limits_{\tgo}\frac{\myH^{(1)}_{\Omega_{\varepsilon}, \Omega^{\delta}}(t)}
{t\ln\pthesis{\frac{1}{t}}}
=\frac{1}{\pi}\, \mathcal{H}^{d-1}(\bd).
\end{align*}
\end{rmk}

We observe that for every $\delta, \varepsilon>0$, we have
\begin{align*}
\myH^{(1)}_{\Omega, \Omega^c}(t)=
\myH^{(1)}_{\Omega \setminus \Omega_{\varepsilon},\Omega^{c}}(t)+ 
\myH^{(1)}_{\Omega_{\varepsilon}, \Omega^{\delta}}(t)+
\myH^{(1)}_{\Omega_{\varepsilon},\Omega^c\setminus \Omega^{\delta}}(t),
\end{align*}
so that in order to prove part $(ii)$ of Theorem \ref{2thm},
we still need  to show the following.

\begin{lem} Let $\Omega\subset \Rd$ be  a bounded domain and consider $\Omega_{\varepsilon}$ and $\Omega^{\delta}$ the
inner and outer tubular neighbourhoods of $\bd$ defined in
\eqref{inout}. Then,
\begin{align*}
\lim\limits_{\tgo}
\frac{\myH^{(1)}_{\Omega \setminus \Omega_{\varepsilon}, \Omega^c}(t)}{t\ln\pthesis{\frac{1}{t}}}
=\lim\limits_{\tgo}\frac{\myH^{(1)}_{\Omega_{\varepsilon},\Omega^c\setminus \Omega^{\delta}}(t)}{t\ln\pthesis{\frac{1}{t}}}=0.
\end{align*} 
\end{lem}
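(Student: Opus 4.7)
The plan is to bound both numerators by a constant multiple of $t$, which is enough because the denominator $t\ln(1/t)$ grows faster than $t$ as $t\downarrow 0$. The key geometric observation is that in both integrals, the source point $x$ and target point $y$ are separated by a uniform positive distance ($\varepsilon$ or $\delta$), so the Cauchy heat kernel is uniformly controlled.

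For the first limit, I would start by noting that whenever $x\in \Omega\setminus\Omega_{\varepsilon}$ and $y\in \Omega^{c}$, we have $|x-y|\geq \rho_{\Omega}(x)\geq \varepsilon$. Using the trivial bound $p^{(1)}_{t}(x,y)\leq k_{d}\,t\,|x-y|^{-(d+1)}$ coming directly from \eqref{Cauchyk}, I compute in polar coordinates
\[
\int_{\Omega^{c}}p^{(1)}_{t}(x,y)\,dy \;\leq\; k_{d}\,t\int_{|y-x|\geq \varepsilon}\frac{dy}{|x-y|^{d+1}} \;=\; \frac{C_{d}\,t}{\varepsilon},
\]
uniformly in $x\in\Omega\setminus\Omega_{\varepsilon}$. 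Since $\Omega$ is bounded, integrating in $x$ gives
\[
\myH^{(1)}_{\Omega\setminus\Omega_{\varepsilon},\Omega^{c}}(t)\;\leq\;\frac{C_{d}\,|\Omega|}{\varepsilon}\,t,
\]
so dividing by $t\ln(1/t)$ yields a bound of order $1/\ln(1/t)$, which tends to $0$.

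For the second limit, the argument is symmetric: whenever $x\in\Omega_{\varepsilon}$ and $y\in\Omega^{c}\setminus\Omega^{\delta}$, we have $\rho_{\Omega}(y)\geq \delta$ and $x\in\Omega$, hence $|x-y|\geq \delta$. The same estimate gives
\[
\int_{\Omega^{c}\setminus\Omega^{\delta}}p^{(1)}_{t}(x,y)\,dy\;\leq\; k_{d}\,t\int_{|y-x|\geq \delta}\frac{dy}{|x-y|^{d+1}}\;=\;\frac{C_{d}\,t}{\delta},
\]
and integrating over $x\in\Omega_{\varepsilon}\subset\Omega$ (which has finite measure) yields
\[
\myH^{(1)}_{\Omega_{\varepsilon},\Omega^{c}\setminus\Omega^{\delta}}(t)\;\leq\;\frac{C_{d}\,|\Omega|}{\delta}\,t,
\]
and again the quotient by $t\ln(1/t)$ vanishes as $\tgo$.

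There is essentially no obstacle here: the content of the lemma is purely the trivial observation that on these ``separated'' regions the Cauchy heat kernel contributes only $O(t)$, which is dominated by the logarithmic correction $t\ln(1/t)$ that captures the genuine boundary contribution isolated in Theorem \ref{2mth}. The only mild point to be careful about is that $\Omega^{c}$ is unbounded, but this is handled immediately by the $|x-y|^{-(d+1)}$ decay of the kernel, whose tail integral over $\{|y-x|\geq \varepsilon\}$ is finite.
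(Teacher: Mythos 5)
Your proof is correct. For the first limit you argue exactly as the paper does: bound $p^{(1)}_t(x,y)\leq k_d\,t\,|x-y|^{-(d+1)}$, note $|x-y|\geq\rho_{\Omega}(x)\geq\varepsilon$ on $\Omega\setminus\Omega_{\varepsilon}$, and evaluate the tail integral $\int_{|y-x|\geq\varepsilon}|x-y|^{-(d+1)}\,dy$ in polar coordinates. For the second limit, however, you take a genuinely cleaner route. The paper also records the inequality $\delta\leq\rho_{\Omega}(y)\leq|x-y|$ for $x\in\Omega$, $y\in\Omega^c\setminus\Omega^{\delta}$, but then uses it only as a pointwise bound $|x-y|^{-d-1}\leq\delta^{-d-1}$ on the bounded piece $(\Omega^c\setminus\Omega^{\delta})\cap B_r(0)$, and handles the unbounded piece $(\Omega^c\setminus\Omega^{\delta})\cap B_r^c(0)$ separately via the triangle inequality $|y-x|\geq|y|-r_{\Omega}$ plus a binomial-theorem computation of the tail. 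You observe instead that the same uniform separation $|x-y|\geq\delta$ lets you integrate over the entire region $\{|y-x|\geq\delta\}$ at once, giving $\int_{\Omega^c\setminus\Omega^{\delta}}p^{(1)}_t(x,y)\,dy\leq C_d\,t/\delta$ directly, with no split. This is shorter, avoids introducing the auxiliary radius $r>r_{\Omega}$, and makes it transparent that both bounds have the same structure. Both arguments yield the desired $O(t)$ bound, so the lemma follows either way; yours is the tidier presentation.
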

\begin{proof}
By appealing to the fact that $\Omega^c \subset B_{\md}^c(x) $  for every $x\in \Omega$  , we  observe that
\begin{align}
\myH_{\Omega \setminus \Omega_{\varepsilon},\Omega^c}^{(1)}(t)&=k_d\,\, t\,\, \int_{\Omega\setminus \Omega_{\varepsilon}}dx\int_{\Omega^c}
\frac{dy}{\pthesis{t^2+|x-y|^2}^{\frac{d+1}{2}}}\nonumber \\&\leq
k_d\,\, t\,\, \int_{\Omega\setminus \Omega_{\varepsilon}}dx\int_{\Omega^c}dy\,|x-y|^{-d-1}\nonumber \\
&\leq k_d\, t\,\int_{\Omega \setminus \Omega_{\varepsilon}}dx
\int_{B_{\md}^c(x)}dy\,\,|x-y|^{-d-1}\label{pc}\\
&=k_d\,t\,\mathcal{H}^{d-1}(\partial B_1(0))\,\int_{\Omega\setminus \Omega_{\varepsilon}}dx\,\, \md^{-1} \nonumber \\
&\leq k_d\,\mathcal{H}^{d-1}(\partial B_1(0))\, \varepsilon^{-1} \,|\Omega|\,t,\nonumber
\end{align}
where in the last inequality we have used that
$\Omega \setminus \Omega_{\varepsilon}=\set{x\in \Omega: \md \geq \varepsilon},$
whereas to compute the integral term in \eqref{pc} we have employed spherical coordinates.

Next, since $\bar{\Omega}$ is compact, 
we have 
\begin{align}\label{radiuso}
0<r_{\Omega}=\sup\limits_{x\in \Omega}|x|<\infty.
\end{align}
Choose any $r>r_{\Omega}$ and notice that $\Omega\subset B_r(0)$. Thus, we find that
\begin{align*}
\int_{\Omega_{\varepsilon}}dx
\int_{\Omega^c\setminus \Omega^{\delta}}dy\,|x-y|^{-d-1}&=
\int_{\Omega_{\varepsilon}}dx
\int_{\pthesis{\Omega^c \setminus \Omega^{\delta}}\cap B_{r}(0)}dy
\,|x-y|^{-d-1} \\
&+\int_{\Omega_{\varepsilon}}dx\int_{\pthesis{\Omega^c \setminus \Omega^{\delta}}\cap B_{r}^c(0)}dy\,|x-y|^{-d-1}.
\end{align*}
Note that for all $x\in \Omega$ and $y \in \Omega^c \setminus \Omega^{\delta}$, we have the following inequality
$\delta\leq \rho_{\Omega}(y)\leq |x-y|.$
 Thus, the first integral term on  the right hand side of the previous equality is  bounded above by
$\delta^{-d-1}|\Omega||B_r(0)|.$
As far as the second integral is concerned, we have  for all $x\in \Omega_{\varepsilon}$ and \eqref{radiuso} that
\begin{align*}
|y-x|\geq |y|-|x|\geq |y|-r_{\Omega}. 
\end{align*}
Thus, 
\begin{align*}
\int_{\Omega_{\varepsilon}}dx\int_{\pthesis{\Omega^c\setminus
\Omega^{\delta}}\cap B_{r}^c(0)}dy\,|x-y|^{-d-1}&\leq |\Omega|\int_{B_{r}^c(0)}dy\pthesis{|y|-r_{\Omega}}^{-d-1}
%\\
%&\leq \abs{\Omega}\mathcal{H}^{d-1}
%(\partial B_1(0))\,r ^{d-1}\,(r-r_{\Omega})^{-d}, 
\end{align*}
for all $r>r_{\Omega}$. By appealing to polar coordinates and the binomial theorem, we obtain
\begin{align*}
\int_{B_{r}^c(0)}dy\pthesis{|y|-r_{\Omega}}^{-d-1}&=\mathcal{H}^{d-1}(\partial B_1(0))\int_{r-r_{\Omega}}^{\infty}dp\,
(p+r_{\Omega})^{d-1}\,p^{-d-1}\\ \nonumber
&=\mathcal{H}^{d-1}(\partial B_1(0))\,
\mysum{j}{0}{d-1}  \binom {d-1} {j}\frac{r_{\Omega}^{\,j}}{(j+1)(r-r_{\Omega})^{j+1}}<\infty.
\end{align*}
Hence, we have shown that
\begin{align*}
\myH_{\Omega_{\varepsilon},\Omega^{c}\setminus \Omega^{\delta}}^{(1)}(t)\leq
C_{\delta,\Omega,\varepsilon}t.
\end{align*}
Finally, the assertion of the Lemma follows by combining all the estimates given above.
\end{proof}

{\bf Proof of part $(iii)$ of Theorem \ref{2thm}}
As before, we notice that $\Omega^c \subset B_{\md}^c(x) $ for every $x\in \Omega$ so that
\begin{align}\label{1stest}
\int_{\Omega_{\varepsilon}}dx\,\int_{\Omega^c}|x-y|^{-d-\alpha}
&\leq \int_{\Omega_{\varepsilon}}dx\,
\int_{B_{\md}^c(0)}dz\,|z|^{-d-\alpha} \nonumber \\
&=\mathcal{H}^{d-1}(\partial B_1(0))\, \alpha^{-1}\, \int_{\Omega_{\varepsilon}}dx\, \rho_{\Omega}^{-\alpha}(x).
\end{align}

Since uniformly $C^{1,1}$  bounded domains are also $R$--smooth boundary domains,  we  have according to Corollary 2.14 in \cite{BanKul} that there exists $\varepsilon >0$ 
(this $\varepsilon$ might not be the same   provided in Proposition \ref{Cdomprop}, however we can choose the smaller of them) such that
\begin{align}\label{est1}
\mathcal{H}^{d-1}(\partial \Omega_r)\leq 2^{d-1}\mathcal{H}^{d-1}(\bd),
\end{align}
for all $0<r<\varepsilon$. Hence, by the co--area formula, we obtain
\begin{align}\label{upperalphaper}
\int_{\Omega_{\varepsilon}}dx\, \rho_{\Omega}^{-\alpha}(x)&=
\int_{0}^{\varepsilon}dr\,r^{-\alpha}\,\mathcal{H}^{d-1}(\partial \Omega_r)
\leq 2^{d-1}\,(1-\alpha)^{-1}\, \mathcal{H}^{d-1}(\bd)\,\varepsilon^{1-\alpha}<\infty.
\end{align}
Likewise, as in \eqref{1stest} and using that $\Omega\setminus\Omega_{\varepsilon}=\set{x\in \Omega:\rho_{\Omega}(x)>\varepsilon}$, we have
\begin{align}\label{2ndest}
\int_{\Omega\setminus\Omega_{\varepsilon}}dx\,
\int_{\Omega^c}\,dy\,|x-y|^{-d-\alpha}&\leq
\mathcal{H}^{d-1}(\partial B_1(0))\, \alpha^{-1}\,\int_{\Omega\setminus\Omega_{\varepsilon}}dx\, \rho_{\Omega}^{-\alpha}(x)\nonumber \\
&\leq \mathcal{H}^{d-1}(\partial B_1(0))\, \alpha^{-1}\,\abs{\Omega}\varepsilon^{-\alpha}.
\end{align}
We have shown with \eqref{1stest} and \eqref{2ndest}
that
$$\myP_{\alpha}(\Omega)=\int_{\Omega}dx\,
\int_{\Omega^c}\,dy |x-y|^{-d-\alpha}<\infty$$
provided that $0<\alpha<1$. Thus, by combining the finiteness of the last integral with \eqref{tcomp}  and \eqref{tlim}, we conclude part $(iii)$ of Theorem \ref{2thm} by an application of the Lebesgue Dominated Convergence Theorem.

\section{upper bounds in theorem \ref{mcor}}\label{sec: ub}
Let $\Omega$ be a bounded domain. Then, it is clear that for every $x\in \Omega$, we have
%\begin{align*}
$\tau_{B_{\md}(x)}^{(\alpha)}\leq \tau_{\Omega}^{(\alpha)}$
%\end{align*}
which implies
\begin{align}\label{exittimesprob}
\Prob^{x}\pthesis{ \tau_{\Omega}^{(\alpha)}<t}\leq
\Prob^{x}\pthesis{ \tau_{B_{\md}(x)}^{(\alpha)}< t}=
\Prob\pthesis{ \tau_{B_{\md}(0)}^{(\alpha)}< t}
\end{align}
for all $t>0$. Therefore, we conclude that
\begin{align*}
Q_{\Omega}^{(\alpha)}(t)=
\int_{\Omega}\,dx\,\Prob^{x}\pthesis{ \tau_{\Omega}^{(\alpha)}\geq t}=\abs{\Omega}-\int_{\Omega}\,dx\,\Prob^{x}\pthesis{ \tau_{\Omega}^{(\alpha)}< t}
\end{align*}
satisfies for all $t>0$ the following inequality 
\begin{align}\label{upper1}
\abs{\Omega}-Q_{\Omega}^{(\alpha)}(t)\leq
\int_{\Omega}\,dx\,\Prob\pthesis{ \tau_{B_{\md}(0)}
^{(\alpha)}< t}.
\end{align}

We now turn to the following result  whose proof and applications to subordinate killed Brownian
motion in a domain can be found in \cite[Proposition 2.1]{Song2}.

\begin{prop}\label{Songresult}
Assume $D$ is a bounded domain  satisfying  an exterior cone condition. Then, there exists $C\in (0,1)$ such that
\begin{equation*}
C\, \mathbb{P}^x( \tau_{D}^{(2)}\leq S_t)\leq\mathbb{P}^x(\tau_{D}^{(\alpha)}\leq t)
\leq \mathbb{P}^x( \tau_{D}^{(2)}\leq S_t),
\end{equation*}
for all $t>0$ and $x\in D.$
\end{prop}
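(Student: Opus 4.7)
The plan is to prove the right and left inequalities separately. The right inequality is a direct pathwise argument: on $\{\tau_D^{(\alpha)}\leq t\}$, one has $X_{\tau_D^{(\alpha)}}=B_{2S_{\tau_D^{(\alpha)}}}\notin D$, so the ordinary Brownian exit time $\tilde\tau_D:=\inf\{u:B_u\notin D\}$ satisfies $\tilde\tau_D\leq 2S_{\tau_D^{(\alpha)}}\leq 2S_t$ by monotonicity of $S$. Since the convention $X_t=B_{2t}$ for $\alpha=2$ gives $\tau_D^{(2)}=\tilde\tau_D/2$, this reads $\tau_D^{(2)}\leq S_t$, and the right inequality follows with constant $1$.

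For the left inequality, the idea is that the exterior cone condition keeps $B$ in $D^c$ for a non-negligible length of time after its first exit, giving the subordinator a positive chance of sampling a time at which $X$ is still outside $D$. I would first extract from the cone condition constants $c_0,\delta_0>0$, depending only on $D$, with
$$\mathbb{P}^y\bigl(B_u\in D^c\text{ for all }u\in[0,\delta_0]\bigr)\geq c_0,\qquad y\in\partial D,$$
by a Brownian scaling argument together with a compactness argument on $\partial D$, and then apply the strong Markov property of $B$ at $\tilde\tau_D$ to obtain the conditional analogue of this estimate on $[\tilde\tau_D,\tilde\tau_D+\delta_0]$. Using independence of $B$ and $S$, on the event that the range $R_t=\{2S_s:s\in[0,t]\}$ meets $[\tilde\tau_D,\tilde\tau_D+\delta_0]$ and $B$ stays outside $D$ on that interval, we have $X_s\notin D$ for some $s\leq t$, whence
$$\mathbb{P}^x(\tau_D^{(\alpha)}\leq t)\geq c_0\,\mathbb{P}^x\bigl(R_t\cap[\tilde\tau_D,\tilde\tau_D+\delta_0]\neq\emptyset\bigr),$$
so the problem is reduced to showing $\mathbb{P}^x(R_t\cap[\tilde\tau_D,\tilde\tau_D+\delta_0]\neq\emptyset)\geq c_1\,\mathbb{P}^x(\tilde\tau_D\leq 2S_t)$ with $c_1>0$ independent of $x$ and $t$.

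The main obstacle is this last comparison, which by conditioning on $B$ and using independence becomes the integrated estimate
$$\int\mathbb{P}\bigl(\tau_u\leq t,\ 2S_{\tau_u}-u\leq\delta_0\bigr)\,\mu_x(du)\geq c_1\int\mathbb{P}(\tau_u\leq t)\,\mu_x(du),$$
where $\mu_x$ denotes the law of $\tilde\tau_D$ under $\mathbb{P}^x$ and $\tau_u=\inf\{s:2S_s\geq u\}$. A pointwise comparison of the two integrands fails uniformly because the unconditional overshoot of an $\alpha/2$-stable subordinator scales as $u\cdot O_1$ and hence $\mathbb{P}(O_u\leq\delta_0)\to 0$ as $u\to\infty$; the resolution exploits the exponential decay of $\mu_x(du)$ at infinity together with the closed-form joint density of $(\tau_u,2S_{\tau_u^-},2S_{\tau_u})$ expressed in terms of the potential and Lévy measures of $S$. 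Splitting the $u$-integration into the regimes $u\lesssim t^{2/\alpha}$ and $u\gtrsim t^{2/\alpha}$ and performing the resulting computations then recovers the integrated inequality with a universal constant, giving the proposition with $C=c_0c_1\in(0,1)$.
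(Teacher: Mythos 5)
Your right-hand inequality is fine and is the standard containment: if $X_s=B_{2S_s}\notin D$ for some $s\leq t$ then $\tilde\tau_D\leq 2S_s\leq 2S_t$, hence $\tau_D^{(2)}\leq S_t$. For the record, the paper does not prove this proposition itself; it cites Song--Vondra\v{c}ek, Proposition~2.1, so there is no ``paper's proof'' to match, only the reference.

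For the left-hand inequality your plan breaks down at two points. First, the estimate you set up in Step~1, $\mathbb{P}^y\bigl(B_u\in D^c\text{ for all }u\in[0,\delta_0]\bigr)\geq c_0>0$ for $y\in\partial D$, is false whenever $y$ is a regular boundary point of the open set $D$, which is the case at every point of $\partial\Omega$ for the $C^{1,1}$ domains the paper uses: Brownian motion started at such a $y$ re-enters $D$ at arbitrarily small positive times almost surely, so this probability is $0$. The exterior cone condition yields regularity for $D^c$, not for $D$; no scaling or compactness can keep the Brownian path out of $D$ on a whole time interval. What is true, and what is actually needed, is pointwise in time: there is $c_0>0$ with $\mathbb{P}^y(B_s\in D^c)\geq c_0$ for every $y\in\partial D$ and every $s>0$ (cone and scaling for small $s$, boundedness of $D$ for large $s$, continuity plus compactness in between). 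Second, even with a repaired Step~1, you reduce everything to an integrated overshoot inequality and then simply assert that ``performing the resulting computations'' gives it; this is the entire difficulty and the uniformity in $x$ and $t$ is delicate --- for $t$ small and $x$ far from $\partial D$, the event $\{\tilde\tau_D\leq 2S_t\}$ typically occurs via one large jump of $S$, whose overshoot of $\tilde\tau_D$ is comparable to $\tilde\tau_D$, not to $\delta_0$. The known proof avoids the overshoot entirely: use $\{\tau_D^{(\alpha)}>t\}\subset\{X_t\in D\}=\{B_{2S_t}\in D\}$, condition on $S_t$, and for deterministic $s>0$ split
\begin{align*}
\mathbb{P}^x(B_{2s}\in D)&=\mathbb{P}^x(\tilde\tau_D>2s)+\mathbb{P}^x(B_{2s}\in D,\ \tilde\tau_D\leq 2s)\\
&\leq\mathbb{P}^x(\tilde\tau_D>2s)+(1-c_0)\,\mathbb{P}^x(\tilde\tau_D\leq 2s)=1-c_0\,\mathbb{P}^x(\tilde\tau_D\leq 2s),
\end{align*}
the middle step by the strong Markov property at $\tilde\tau_D$ and the pointwise cone estimate applied at the single time $2s$. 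Integrating against the law of $S_t$ and using independence gives $\mathbb{P}^x(\tau_D^{(\alpha)}>t)\leq 1-c_0\,\mathbb{P}^x(\tau_D^{(2)}\leq S_t)$, i.e.\ the left inequality with $C=c_0$. The essential simplification over your route is that one only needs to know where the subordinate process sits at the fixed terminal time $t$, not when and with what overshoot the subordinator first crosses $\tilde\tau_D$.
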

In particular, by appealing to the last proposition with
$D=B_{\md}(0)$ and 
\eqref{upper1}, we find that
\begin{align}\label{upper2}
\abs{\Omega}-Q_{\Omega}^{(\alpha)}(t)\leq
\int_{\Omega}\,dx\,\,\Prob\pthesis{ \tau_{B_{\md}(0)}
^{(2)}< S_t}.
\end{align}
Next,   the independence between the Brownian Motion ${\bf B}$ and $\alpha/2$-subordinator ${\bf S}$ as stated in the introduction yields 
\begin{align}\label{upper3}
\Prob\pthesis{ \tau_{B_{\md}(0)}
^{(2)}< S_t}&= \Prob\pthesis{ \tau_{B_{\md}(0)}
^{(2)}< t^{2/\alpha}S_1}\\ \nonumber
&=\int_{0}^{\infty}\,ds\,\eta_1^{(\alpha/2)}(s)\,\Prob_{\bf B}\pthesis{ \tau_{B_{\md}(0)}
^{(2)}< t^{2/\alpha}s}.
\end{align}
In \cite[Lemma 3.3]{van2},  it is shown that
\begin{align*}
\Prob_{\bf B}\pthesis{ \tau_{B_{\md}(0)}
^{(2)}< t^{2/\alpha}s}
\leq 2^{(d+2)/2}\exp 
\pthesis{- \frac{\rho_{\Omega}^2(x)}{8\, t^{2/\alpha}\,s}},
\end{align*} 
so that by using Fubini's theorem, \eqref{upper2} and \eqref{upper3},
we arrive at
\begin{align}\label{Shcineq}
\abs{\Omega}-Q_{\Omega}^{(\alpha)}(t)\leq
2^{(d+2)/2}\int_{\Omega}\,dx\,\E{\exp\pthesis{- \frac{\rho_{\Omega}^2(x)}{8\, t^{2/\alpha}\,S_1}}}.
\end{align}

We split the foregoing integral as follows
\begin{align}\label{splitint}
\int_{\Omega}\,dx\,\E{\exp\pthesis{- \frac{\rho_{\Omega}^2(x)}{8\, t^{2/\alpha}\,S_1}}}= I_{\alpha}(t) + II_{\alpha}(t)
\end{align}
with
\begin{align*}
I_{\alpha}(t)&=\int_{\Omega_{\varepsilon}}\,dx\, \E{\exp\pthesis{- \frac{\rho_{\Omega}^2(x)}{8\, t^{2/\alpha}\,S_1}}},\\
II_{\alpha}(t)&=\int_{\Omega\setminus \Omega_{\varepsilon}}dx\,\E{\exp\pthesis{- \frac{\rho_{\Omega}^2(x)}{8\, t^{2/\alpha}\,S_1}}}
\end{align*}
and observe by \eqref{basicupperest} with $\kappa=\md\,8^{-1/2}\, t^{-\inalp}$, we obtain 
\begin{align}\label{2int}
II_{\alpha}(t)\leq C_{\alpha}\, \abs{\Omega}8^{\alpha/2}\varepsilon^{-\alpha}\,t
\end{align}
and by \eqref{est1} and co--area formula, we also have
\begin{align}\label{upperboundlimsup}
I_{\alpha}(t)&=\int_{0}^{\varepsilon}dr\,
\E{\exp\pthesis{-\frac{r^2}{ 8\, t^{2/\alpha}\,S_1}}}\mathcal{H}^{d-1}(\partial \Omega_{r})\\
&\leq2^{d-1}\, \mathcal{H}^{d-1}(\bd)\int_{0}^{\varepsilon} 
dr\, \E{\exp\pthesis{-\frac{r^2}{ 8\, t^{2/\alpha}\,S_1}}}\nonumber \\
&=2^{(2d+1)/2}\,\mathcal{H}^{d-1}(\bd)\,t^{\inalp}\int_{0}^{
\varepsilon\,8^{-1/2}\,t^{-\inalp}} 
dw\, \E{\exp\pthesis{-\frac{w^2}{S_1}}}\nonumber
\end{align}
for all $0<\alpha<2$ and $t>0$. 

 In order to obtain upper bounds it suffices to deal with  the integral term on the right hand side  of   the above inequality. As before, we divide this into various cases according to $\alpha$. 
\\

{\bf Case $1 < \alpha<2$:} By appealing to the identity 
\begin{align*}
\int_{0}^{\infty}
dw\,\exp\pthesis{-\frac{w^2}{s}}=2^{-1}\,\pi^{1/2}\,s^{1/2}
\end{align*}
 Fubini's Theorem and \eqref{gen.expt.S1}, we arrive at
\begin{align*}
\int_{0}^{\infty}
dw\, \E{\exp\pthesis{-\frac{w^2}{S_1}}}=2^{-1}\,\pi^{1/2}\,
\E{S_{1}^{1/2}}=2^{-1}\Gamma\pthesis{1-\inalp}
\end{align*}
so that by \eqref{upperboundlimsup}
\begin{align*}
I_{\alpha}(t)\leq 2^{(2d-1)/2}\,\mathcal{H}^{d-1}\,(\bd)\,t^{\inalp}\,\Gamma\pthesis{1-\inalp}.
\end{align*}
By putting together the preceding estimates and the inequalities \eqref{Shcineq} and \eqref{2int}, we obtain for all $t>0$ that
\begin{align*}
\abs{\Omega}-Q_{\Omega}^{(\alpha)}(t)
\leq 2^{(d+2)/2}\pthesis{C_{\alpha}\,\abs{\Omega}8^{\alpha/2}
\varepsilon^{-\alpha}\,t+2^{(2d-1)/2}\,\mathcal{H}^{d-1}(\bd)\,\Gamma\pthesis{1-\inalp}\,t^{\inalp}}.
\end{align*}
It easily follows that
\begin{align*}
\varlimsup\limits_{\tgo}\frac{\abs{\Omega}-Q_{\Omega}^{(\alpha)}(t)}{t^{\inalp}}\leq\,
2^{(3d+1)/2}\,\,\Gamma\pthesis{1-\inalp}\,\mathcal{H}^{d-1}(\bd).
\end{align*}

{\bf Case $\alpha=1$:} The $1/2$--subordinator ${\bf S}$ can be expressed as the first hitting time for the standard one-dimensional  Brownian motion $\set{W_t}_{t\geq0}$. More precisely, 
$
S_{t}=\inf\left\{s>0:W_s=\frac{t}{ \sqrt{2}}\right\}. 
$
It is known (see \cite{appleb} for details) that its transition density is given by
\[
\eta_{t}^{(1/2)}(s)= \frac{t}{2\sqrt{\pi}}s^{-3/2}e^{-t^2/4s}.
\]

A simple computation yields 
\begin{align*}
\E{\exp\pthesis{-\frac{w^2}{S_1}}}&=\frac{1}{\sqrt{4w^2+1}},\\
\int_{0}^{\varepsilon 8^{-1/2}\,\,t^{-1}}\frac{dw}{\sqrt{4w^2+1}}
&=\frac{1}{2}\ln\pthesis{\frac{1}{t}}+\frac{1}{2}
\ln\pthesis{ \frac{\varepsilon}{ \sqrt{2}}+
\sqrt{\frac{\varepsilon^2}{2}+t^2}}.
\end{align*}
Therefore, \eqref{Shcineq}, \eqref{2int} and the previous calculations show that $\abs{\Omega}-Q_{\Omega}^{(1)}(t)$ is bounded  above by
\begin{align*}
 2^{(d+2)/2}\pthesis{C_{1}\,\abs{\Omega}\sqrt{8}
\varepsilon^{-1}\,t+2^{(2d-1)/2}\,\mathcal{H}^{d-1}(\bd)\,t
\pint{
\ln\pthesis{\frac{1}{t}}+
\ln\pthesis{ \frac{\varepsilon}{ \sqrt{2}}+
\sqrt{\frac{\varepsilon^2}{2}+t^2}}
}}
\end{align*}
which in turn  implies 
\begin{align*}
\varlimsup\limits_{\tgo}
\frac{ \abs{\Omega}-Q_{\Omega}^{(1)}(t)}{t\ln\pthesis{\frac{1}{t}}}\leq\,
2^{(3d+1)/2}\,\,\mathcal{H}^{d-1}(\bd).
\end{align*}
\bigskip

{\bf Case $0<\alpha<1$:} For this case, it is not necessary to employ expression \eqref{splitint}. Instead, we apply  again  the inequality \eqref{basicupperest} with $\kappa=\md\,8^{-1/2}\, t^{-\inalp}$ to obtain
\begin{align*}
\int_{\Omega}\,dx \,\E{\exp\pthesis{- \frac{\rho_{\Omega}^2(x)}{8\, t^{2/\alpha}\,S_1}}}
\leq 8^{\alpha/2}\,C_{\alpha}\,\pthesis{\int_{\Omega}\,dx \,\,\rho_{\Omega}^{-\alpha}(x)}\,\,t.
\end{align*}
 The integral term at the right hand side turns out to be finite because of \eqref{upperalphaper}
and the fact 
\begin{align*}
\int_{\Omega\setminus \Omega_{\varepsilon}}dx\,\, \rho_{\Omega}^{-\alpha}(x) \leq \,
\varepsilon^{-\alpha} \,|\Omega|.
\end{align*}
Therefore, we find that
\begin{align*}
 \varlimsup\limits_{\tgo} 
\frac{|\Omega|-Q_{\Omega}^{(\alpha)}(t)}{t}\leq 2^{(d+2+3\alpha)/2}\,C_{\alpha}
\,\int_{\Omega}\,dx\, \rho_{\Omega}^{-\alpha}(x).
\end{align*}

Assume now that $\Omega$ also satisfies a uniform exterior volume condition. That is, there exists $c>0$ such that for any $\sigma \in \bd$ and any $r>0$ we have $\abs{B_r(\sigma)\cap \Omega^c}\geq c\,r^d$. Then, we  claim that for $ x \in \Omega$  the following inequality holds.
\begin{align}\label{claim1}
\frac{c}{2^{d+\alpha}}\, \rho^{-\alpha}_{\Omega}(x)\leq  \int_{\Omega^c}\frac{dy}{|x-y|^{d+\alpha}}.
\end{align}  To see this, let $x\in \Omega$ and choose $\sigma_x \in \bd$ such that $\md=\abs{\sigma_x-x}.$
Thus, for any $y$ belonging to $B_{\md}(\sigma_x)\cap \Omega^c$, we obtain
$\abs{x-y}\leq |x-\sigma_x|+|\sigma_x-y|\leq 2\,\md.$
Therefore, it follows from the last inequality that
\begin{align*}
\int_{\Omega^c}\frac{dy}{|x-y|^{d+\alpha}}&\geq \int_{B_{\md}(\sigma_x)\cap \Omega^c}
\frac{dy}{|x-y|^{d+\alpha}}\\
&\geq\frac{1}{2^{d+\alpha}}\abs{B_{\md}(\sigma_x)\cap \Omega^c}\md^{-d-\alpha}\geq \frac{c}{2^{d+\alpha}}\, \rho^{-\alpha}_{\Omega}(x). 
\end{align*}
In other words,
for bounded domains $\Omega$ with smooth boundary and $0<\alpha<1$, the small time behavior of 
$t^{-1}\pthesis{\abs{\Omega}-Q_{\Omega}^{(\alpha)}(t)}$
and the fractional $\alpha$-perimeter $\myP_{\alpha}(\Omega)$ defined in \eqref{alpper} are related and this completes the proof of Theorem \ref{mcor}.
\qed
\bigskip

{\bf Acknowledgments}: I am grateful  to my Ph.D supervisor,  Professor Rodrigo Ba\~nuelos, for his valuable suggestions and time while preparing this paper. I wish to  thank the referee for the many useful comments and corrections.

%--------------------------------------------------------------------------------------


\begin{thebibliography}{30} 

\bibitem{Acu} %00
L. Acu\~{n}a Valverde, {\it Trace asymptotics for Fractional Schr\"{o}dinger operators.} Journal of Functional Analysis, {\bf 266}, 514-559, (2014).

\bibitem{Acu3}%00
L. Acu\~{n}a Valverde, R. Ba\~nuelos, {\it Heat content and small time asymptotics for Schr\"{o}dinger operators on $\R^d$.} Potential Analysis, DOI10.1007/s11118-014-9441-6, (2014).

\bibitem{Acu2}%00
L. Acu\~{n}a Valverde, {\it A decomposition for additive functional of L\'{e}vy processes.} Stochastic Processes and their applications,
{\bf 125(3)}, 994-1008, (2015).

\bibitem{Aik}%00
H. Aikawa, T. Kilpel\"{a}inen, N. Shanmugalingam, X. Zhong, {\it Boundary Harnack principle for p-harmonic functions in smooth Euclidean domains.} Potential Analysis, {\bf 26}, 281-301, (2007).

\bibitem{appleb} %00
D. Applebaum, {\it L\'{e}vy Processes and Stochastic Calculus.} Second edition, Cambridge University Press, (2009).

\bibitem{BanSab}%00
R. Ba\~{n}uelos, A. S\'{a} Barreto, {\it On the heat trace of Schr\"{o}dinger operators.} Communications in Partial Differential equations, {\bf 20}, 2153-2164, (1995).


\bibitem{BanKul}%00
R. Ba\~{n}uelos, T. Kulczycki, {\it Trace estimates for stable processes.}  Probability Theory and Related Fields, {\bf 142}, 313-338, (2008).

\bibitem{BanKulSiu} %00
R. Ba\~{n}uelos, T. Kulczycki, B. Siudeja. {\it On the trace of symmetric stable processes on Lipschitz domains.}  Journal of Functional Analysis, {\bf 257(10)}, 3329-3352, (2009). 

\bibitem{BanYil}%00
 R. Ba\~nuelos, S. Yildirim, {\it Heat trace of non-local operators.}  Journal London Mathematical Society, {\bf 87(1)}, 304-318, (2013).

\bibitem{van5}%00
M. van den Berg, { \it On the asymptotics of the heat equation and bounds on traces
associated with the Dirichlet Laplacian.} Journal of Functional Analysis, {\bf 71}, 279-293, (1987).

\bibitem{van6}%00
M. van den Berg,  K. Gittins. 
{\it  Uniform bounds for the heat content of
open sets in Euclidean space.} Differential Geometry and its Applications, {\bf 40}, 67--85, (2015).

\bibitem{van7}%00
 M. van den Berg, P. Gilkey. 
 {\it Heat flow out of a compact manifold.} The Journal of Geometric Analysis, {\bf 25},  1576--1601, (2015).
 
 \bibitem{van8}%00
M. van den Berg, K. Gittins. {\it On the heat content of a polygon.} The Journal of Geometric Analysis, DOI 10.1007/s12220-015-9626-2, 1-34, (2015).



\bibitem{van2} %00
M. van den Berg, J. F. Le Gall, {\it Mean curvature and heat equation}.  Math Z., {\bf 215}, 437-464, (1994).

\bibitem{van4}%00
M. van den Berg, {\it Heat content and Brownian motion for some regions
with a fractal boundary}. Probability Theory and Related Fields, {\bf 100}, 439-456, (1994).

\bibitem{vanden3} %00
M. van den Berg,  {\it Heat Flow and Perimeter in $\R^m$.} Potential Analysis, {\bf 39}, 369-387,  (2013). 

\bibitem{vandenDryKap} %00
M. van den Berg, E. B. Dryden, T. Kappeler, {\it Isospectrality and heat content.} Bulletin of the London Mathematical Society, {\bf46}, 793-808, (2014).


\bibitem{Blum}%00
R.M. Blumenthal, R.K. Getoor,{ \it Some Theorems on Stable Processes.} Trans. Amer. Math. Soc., {\bf 95}, 263-273, (1960).

\bibitem{Bog}%00
K. Bogdan, {\it Potential Analysis of Stable Processes and its extensions.} Lecture notes in mathematics {\bf 1980}. Springer-Verlag, (2009).

\bibitem{Bog2}%00
K. Bogdan, T. Grzywny, {\it Heat kernel of fractional Laplacian in cones}.
\newline 
\href{http://arxiv.org/abs/0903.2269}{http://arxiv.org/abs/0903.2269}, (2009).

\bibitem{Chen1}%00
Z. Q. Chen, T. Kumagai, {\it Heat kernel estimates for stable-like processes on d-sets.}  Stochastic Processes and their applications,  {\bf 108}, 27-62, (2003).

\bibitem{Song}%00
Z. Q. Chen, R. Song, {\it Hardy inequality for Censored  Stable
processes.} Tohoku math.J., {\bf 55}, 439-450, (2003).

\bibitem{Davies}%00
 E.B. Davies, {\it Heat kernels and spectral theory}. Cambridge University press. {\bf 92}, (1989).

 \bibitem{Frank}%00
R. L. Frank, R. Seiringer, {\it Non--linear ground state 
representations and sharp Hardy--inequalities.} Journal of Functional Analysis, {\bf 255}, 3407-3430, (2008).
 
\bibitem{FracP}%00
N. Fusco, V. Millot, M. Morini, {\it A quantitative isoperimetric inequality for fractional perimeters.}
Journal of Functional Analysis, {\bf 261}, 697-715, (2011).

\bibitem{Gil1}%00
 P. Gilkey, {\it Asymptotic Formulae in Spectral Geometry.} Stud. Adv. Math., Chapman \& Hall/ CRC, Boca Raton, FL (2004). 
 
%\bibitem{Tad}
 %T. Kulczycki, {\it Properties of Green function of symmetric stable processes.} Probability and  Mathematical Statistics, {\bf 17}, 330-364, (1997).


\bibitem{Mir} %00
M. Miranda, D. Pallara,  F. Paronetto, M. Preunkert, {\it Short-time heat flow and functions of bounded variation in $\Rd$.}  Annales de la Faculte des Sciences de Toulouse, {\bf 16},  125-145, (2007).

%\bibitem{Park}
%H. Park, R. Song, {\it Trace estimates for relativistic stable processes.} Potential Analysis,
%{\bf 41}, 1273-1291, (2013).

\bibitem{Pre}% 00
M. Preunkert,  {\it A Semigroup version of the isoperimetric inequality.} Semigroup Forum, {\bf 68}, 233-245, (2004).

\bibitem{Sato}%00
K. Sato, {\it L\'{e}vy processes and infinitely divisible distributions.} Cambridge University press, (1999).

\bibitem{Song2}%00
R. Song, Z. Vondra$\check{c}$ek, {\it Potential theory of subordinated killed Brownian motion in a domain.} Probability Theory and Related fields, {\bf 125}, 578-592, (2003).

\bibitem{Wloka}%00
J. Wloka, {\it Partial Differential Equations.} Cambridge U.P., (1987).
\end{thebibliography}
\end{document}